\def \qed {\hfill \vrule height6pt width 6pt depth 0pt}
\def\textmatrix#1&#2\\#3&#4\\{\bigl({#1 \atop #3}\ {#2 \atop #4}\bigr)}
\def\dispmatrix#1&#2\\#3&#4\\{\left({#1 \atop #3}\ {#2 \atop #4}\right)}
\newcommand{\beg}{\begin{equation}}
	\newcommand{\eeg}{\end{equation}}
\newcommand{\ben}{\begin{eqnarray*}}
	\newcommand{\een}{\end{eqnarray*}}
\newtheorem{thm}{Theorem}[section]
\newtheorem{cor}[thm]{Corollary}
\newtheorem{prop}[thm]{Proposition}
\numberwithin{equation}{section} \theoremstyle{definition}
\newtheorem{defn}[thm]{Definition}
\newtheorem{rem}[thm]{Remark}
\newtheorem{eg}[thm]{Example}
\newcommand{\C}{\mathbb{C}}
\newcommand{\T}{\mathbb{T}}
\newcommand{\N}{\mathbb{N}}
\newcommand{\HS}{\mathcal{H}}
\def\textmatrix#1&#2\\#3&#4\\{\bigl({#1 \atop #3}\ {#2 \atop #4}\bigr)}
\def\dispmatrix#1&#2\\#3&#4\\{\left({#1 \atop #3}\ {#2 \atop #4}\right)}
\title[$q$-commuting $2 \times 2$ scalar matrices]{Classification and dilation for $q$-commuting $2 \times 2$ scalar matrices}
\author[PAL, SAHASRABUDDHE AND TOMAR]{SOURAV PAL, PRAJAKTA SAHASRABUDDHE AND NITIN TOMAR}
\address[Sourav Pal]{Mathematics Department, Indian Institute of Technology Bombay,
	Powai, Mumbai - 400076, India.} \email{souravpal@iitb.ac.in}
\address[Prajakta Sahasrabuddhe]{Mathematics Department, Indian Institute of Technology Bombay, Powai, Mumbai-400076, India.} \email{prajakta@math.iitb.ac.in}	
\address[Nitin Tomar]{Mathematics Department, Indian Institute of Technology Bombay, Powai, Mumbai-400076, India.} \email{tnitin@math.iitb.ac.in}
\keywords{$q$-commuting contractions, Operator model, $q$-unitary dilation}	
\subjclass[2020]{47A13, 47A20, 47A45}	
\begin{document}
	
	\maketitle

	\begin{abstract}
		A tuple $\underline{T}=(T_1, \dotsc, T_k)$ of operators on a Hilbert space $\HS$ is said to be \textit{$q$-commuting with} $\|q\|=1$ or simply $q$-\textit{commuting} if there is a family of scalars $q=\{q_{ij} \in \C : |q_{ij}|=1, \ q_{ij}=q_{ji}^{-1}, \  1 \leq i < j \leq k \}$ such that $T_i T_j =q_{ij}T_j T_i$  for $1 \leq i < j \leq k$. Moreover, if each $q_{ij}=-1$, then $\underline{T}$ is called an \textit{anti-commuting tuple}. A well-known result due to Holbrook \cite{Holbrook} states that a commuting $k$-tuple consisting of $2 \times 2$ scalar matrix contractions always dilates to a commuting $k$-tuple of unitaries for any $k\geq 1$. To find a generalization of this result for a $q$-commuting $k$-tuple of $2\times 2$ scalar matrix contractions, we first classify such tuples into three types upto similarity. Then we prove that a $q$-commuting tuple which is unitarily equivalent to any of these three types, admits a $\widetilde{q}$-unitary dilation, where $\widetilde q \subseteq q \cup \{1\}$. A special emphasis is given to the dilation of an anti-commuting tuple of $2 \times 2$ scalar matrix contractions.  
	\end{abstract}

\section{Introduction}\label{sec01}
	
		\vspace{0.2cm}
	
	\noindent Throughout the paper, all operators are bounded linear maps acting on complex Hilbert spaces. We denote by $\mathbb{C}, \mathbb{D}$ and $\mathbb{T}$ the complex plane, the unit disk and the unit circle in the complex plane, respectively, with center at the origin. $\mathcal{B}(\HS)$ denotes the algebra of operators acting on a Hilbert space $\HS$. For an operator $T$, we denote its spectrum by $\sigma(T)$. A \textit{contraction} is an operator with norm not greater than $1$. A scalar multiple of the identity operator is called a \textit{scalar map}. 
	
	\smallskip
	
	In this article, we shall study a $q$-commuting tuple of operators which is defined below.
	\begin{defn}
		A tuple $\underline{T}=(T_1, \dotsc, T_k)$ of operators acting on a Hilbert space $\HS$ is said be \textit{$q$-commuting with} $\|q\|=1$ or simply $q$-\textit{commuting} if there exists a family of scalars $q=\{q_{ij} \in \T: \ q_{ij}=q_{ji}^{-1}, \  1 \leq i < j \leq k \}$ such that $T_i T_j =q_{ij}T_j T_i$ for $1 \leq i < j \leq k$. Additionally, if $T_i T_j^* =\overline{q}_{ij}T_j^*T_i$ for $1 \leq i < j \leq k$, then $\underline{T}$ is said to be \textit{doubly $q$-commuting}.  We say that $\underline{T}$ is an \textit{anti-commuting} tuple if $T_iT_j=-T_jT_i$ for $ 1\leq i<j \leq k$.
\end{defn}
Since we consider only $q$-commuting tuples of contractions with $\|q\|=1$, throughout the paper such a tuple will be simply referred to as a \textit{$q$-commuting tuple} or \textit{$q$-commuting tuple of contractions} without mentioning the condition `$\|q\|=1$'.

\smallskip

Let $\underline{T}=(T_1, \dotsc, T_k)$ be a commuting or $q$-commuting tuple of contractions acting on a Hilbert space $\HS$. A commuting or $\widetilde{q}$-commuting tuple of unitaries $\underline{U}=(U_1, \dotsc, U_k)$ acting on a Hilbert space $\mathcal{K}$ is said to dilate $\underline{T}$ if there is an isometry $V: \HS \to \mathcal{K}$ such that 
		\[
		T_1^{m_1}\dotsc T_k^{m_k}=V^*U_1^{m_1}\dotsc U_k^{m_k}V
		\] 	
		for all non-negative integers $m_1, \dotsc, m_k$. More specifically, if $\underline{U}$ satisfies the same $q$-intertwining relations as $\underline{T}$, i.e. if $q=\widetilde{q}$, then we say that $\underline{U}$ is a $q$-unitary dilation of $\underline{T}$. In 1953, Bela Sz.-Nagy \cite{Nagy 1} proved that every contraction dilates to a unitary. After a few years, Ando \cite{Ando} generalized Sz.-Nagy's result for a pair of commuting contractions, that is, every pair of commuting contractions can be dilated to a pair of commuting unitaries. However, Parrott \cite{Par} showed by an example that a commuting triple of contractions may not admit dilation to a commuting triple of unitaries. Evidently, Parrott's result closes the possibility of having unitary dilation in general for a tuple of commuting contractions $(T_1, \dots, T_k)$ if $k>2$. For further reading on this matter, an interested reader is referred to CH-1 of classic \cite{Nagy}. After Parrott's example, the problem of characterizing the commuting tuples of contractions $(T_1, \dots, T_k)$ remains unresolved till date. In \cite{Holbrook}, Holbrook profoundly found success of unitary dilation for any commuting tuple of contractions $(T_1, \dots , T_k)$ acting on $\C^2$. The result is stated below.
		
\begin{prop}[\cite{Holbrook}, Proposition 3]\label{prop01}
Any $k$-tuple of commuting contractions on $\C^2$ has a unitary dilation.
\end{prop}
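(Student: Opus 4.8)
The plan is to recast the existence of a commuting unitary dilation as an operator-valued moment problem on the torus, and to solve it by exploiting the rigidity of commuting $2\times 2$ matrices. Since $T_1,\dots,T_k$ commute, a single unitary $W$ puts them simultaneously in upper-triangular form, and the property of admitting a commuting unitary dilation is preserved under unitary equivalence; so I may assume $T_i=\begin{pmatrix} a_i & b_i\\ 0 & c_i\end{pmatrix}$ with $|a_i|,|c_i|\le 1$. The commutation relations force $b_j(a_i-c_i)=b_i(a_j-c_j)$ for all $i,j$; equivalently, the unital commutative algebra generated by the $T_i$ in $M_2(\C)$ is at most two-dimensional, so (the scalar case being trivial) each $T_i=\alpha_i I+\beta_i A$ for a fixed non-scalar $A$. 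This splits the problem into two canonical shapes: the simultaneously diagonalizable case and the Jordan case $T_i=a_iI+b_iN$ with $N=\begin{pmatrix}0&1\\0&0\end{pmatrix}$.

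Next I would invoke the Naimark dilation theorem to reduce everything to the following concrete problem: produce a positive $M_2(\C)$-valued Borel measure $\Omega$ on $\T^k$ with total mass $\Omega(\T^k)=I$ and prescribed moments $\int_{\T^k} z^{\underline m}\,d\Omega = T^{\underline m}$ for every $\underline m\in\Z_{\ge 0}^k$; any such $\Omega$ equals $V^*E\,V$ for a projection-valued measure $E$ (i.e.\ a $k$-tuple of commuting unitaries $U_i=\int z_i\,dE$) and an isometry $V$, which is exactly the sought dilation. For the two diagonal entries I would take $\mu_{11},\mu_{22}$ to be products of harmonic (Poisson) measures, with density $P_a$ the Poisson kernel at $a$, so that against normalized arc length $dm$ on $\T^k$ one has $\int z^{\underline m}d\mu_{11}=\prod_i a_i^{m_i}$ and $\int z^{\underline m}d\mu_{22}=\prod_i c_i^{m_i}$, matching the diagonal moments of $T^{\underline m}$. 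The real work is the off-diagonal complex measure $\mu_{12}$, whose moments must equal $(T^{\underline m})_{12}$ on the positive orthant and vanish on the negative orthant (so that the $(2,1)$-entry vanishes), while the full matrix density stays positive semidefinite almost everywhere.

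The mechanism is already visible, and exact, in one variable. For $T=\begin{pmatrix}a&b\\0&c\end{pmatrix}$ the choice $d\mu_{12}=G\,dm$ with $G(z)=\dfrac{bz}{(z-a)(z-c)}$ produces precisely the moments $(T^m)_{12}$ and nothing on negative powers, and on $\T$ one computes $|G|^2=P_aP_c\cdot\dfrac{|b|^2}{(1-|a|^2)(1-|c|^2)}$. Hence the positivity requirement $|G|^2\le P_aP_c$ is literally the inequality $|b|^2\le (1-|a|^2)(1-|c|^2)$, which is exactly the condition $\|T\|\le 1$. This confirms that for $2\times 2$ contractions the contractivity hypothesis is precisely what powers the positivity, and it is the phenomenon I would try to propagate to $k$ variables.

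The hard part will be the joint positivity for $k\ge 2$. The naive product construction does not work: the ``minimal analytic'' off-diagonal density built from products of one-variable kernels fails the pointwise bound (already for $k=2$ its modulus can exceed the square root of the product Poisson density $P_{\underline a}P_{\underline c}$), which reflects the genuine fact that the correct $\Omega$ must be correlated rather than a product. For the degenerate tuple $(N,\dots,N)$, for instance, the correct dilation takes all unitaries equal, so $\Omega$ concentrates on the diagonal of $\T^k$. Thus I would have to exploit the freedom left at the mixed multi-indices, where no moment is prescribed, to solve a Nehari/commutant-lifting-type extension problem, using the $k$ contraction conditions simultaneously rather than one at a time. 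Equivalently, via the single-generator form $T_i=\alpha_iI+\beta_iA$, I would first dilate the single contraction $A$ (scaled to be a contraction) to a unitary $U$, obtaining an automatic commuting \emph{normal} dilation $\widetilde T_i=\alpha_iI+\beta_iU$; the remaining obstacle is to arrange $\sigma(U)$ so that the joint spectrum $\{(\alpha_i+\beta_i\zeta)_i:\zeta\in\sigma(U)\}$ lies in the closed polydisc $\DC^{\,k}$, i.e.\ so that the $\widetilde T_i$ are themselves contractions and hence dilate further to commuting unitaries. Making the contraction hypotheses on the $T_i$ yield this spectral constraint is, to my mind, the crux of the proof.
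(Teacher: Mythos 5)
Your preliminary reductions are sound: the simultaneous unitary triangularization, the relation $b_j(a_i-c_i)=b_i(a_j-c_j)$ forcing the tuple into $\mathrm{span}\{I,A\}$ for a single non-scalar $A$, the Naimark reformulation as a matrix-valued moment problem on $\T^k$, and the one-variable density $G(z)=bz/\bigl((z-a)(z-c)\bigr)$, whose Fourier coefficients and modulus you compute correctly (the positivity $|G|^2\le P_aP_c$ is exactly $|b|^2\le(1-|a|^2)(1-|c|^2)$, i.e.\ $\|T\|\le 1$). But the argument stops precisely where the theorem lives: you yourself flag the joint positivity for $k\ge 2$, equivalently the spectral constraint on $U$, as ``the crux'' and leave it unresolved, so what you have is a program rather than a proof. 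Worse, the closing linear reformulation provably cannot be completed as stated. In the Jordan case $T_i=a_iI+b_iN$, every unitary power dilation $U$ of $N$ contains the minimal one as a direct summand, and the minimal unitary dilation of the $2\times 2$ Jordan block is the bilateral shift; hence $\sigma(U)=\T$ and $\|a_iI+b_iU\|=|a_i|+|b_i|$. Contractivity of $T_i$ only yields $|b_i|\le 1-|a_i|^2$ (take $a_i=1/2$, $b_i=3/4$: then $\|T_i\|=1$ but $|a_i|+|b_i|=5/4$), so the normal operators $a_iI+b_iU$ need not be contractions, and no choice of unitary dilation of $N$ avoids this.

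The missing idea — and the engine of Holbrook's argument, which the present paper only cites from \cite{Holbrook} without reproducing a proof — is to use nonlinear symbols: realize $T_i=f_i(C)$ for a \emph{single} contraction $C$ and holomorphic self-maps $f_i$ of the disk. In the Jordan case take $C=N$; a self-map with $f_i(0)=a_i$ and $f_i'(0)=b_i$ exists by Schwarz--Pick precisely when $|b_i|\le 1-|a_i|^2$, which is precisely $\|T_i\|\le 1$ — the same exact match of contractivity with an interpolation condition that you observed in one variable, now placed where it closes the argument instead of motivating it. In the simultaneously diagonalizable case one solves a two-point Nevanlinna--Pick problem, choosing the eigenvalues of $C$ pseudo-hyperbolically far enough apart that contractivity of each $T_i$ supplies the Pick condition. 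The interpolants can be taken rational and analytic on $\DC$, so if $U$ is a unitary power dilation of $C$, the disc-algebra calculus gives $\prod_i T_i^{m_i}=V^*\prod_i f_i(U)^{m_i}V$, and $(f_1(U),\dotsc,f_k(U))$ is a commuting tuple of \emph{normal} contractions; by Fuglede it is doubly commuting, so Theorem \ref{thm_dcq}(2) with all $q_{ij}=1$ finishes the dilation to commuting unitaries. In short: the spectral constraint you correctly identified as the obstacle should be imposed on the symbols $f_i$, not on $\sigma(U)$, and until that step is carried out the proposal does not prove the proposition.
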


The aim of this paper is to investigate if Holbrook's result can be generalized for a $q$-commuting tuple of contractions acting on $\C^2$. Note that in \cite{PalII}, the authors of this article found success of $q$-unitary dilation for several classes of $q$-commuting contractions as mentioned below.

\begin{thm}[\cite{PalII}, Theorem 5.2]\label{thm_dcq}
Let $\mathcal{T}=\{T_\alpha : \alpha \in \Lambda\}$ be a $q$-commuting family of contractions on a Hilbert space $\HS$. Then $\mathcal{T}$ admits a $q$-unitary dilation in each of the following cases.
		\begin{enumerate}
			\item $\mathcal{T}$ consists of isometries.
			\item $\mathcal{T}$ consists of doubly $q$-commuting contractions.
			\item $\mathcal{T}$ is a countable family and ${\displaystyle \sum_{\alpha \in \Lambda} \|T_{\alpha}h\|^2 \leq \|h\|^2 }$ for all $h\in \HS$. 
		\end{enumerate}
	\end{thm}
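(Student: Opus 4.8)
The plan is to treat case (1) as the engine and reduce the other two to it: I would first show that a $q$-commuting family of \emph{isometries} extends to a $q$-commuting family of unitaries, and then, for cases (2) and (3), manufacture a $q$-commuting \emph{isometric} dilation of the given contractions and feed it into case (1). This organizes the three statements around a single core construction and isolates the one genuinely delicate issue, namely the bookkeeping of the unimodular phases $q_{ij}$.

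For case (1), let $\{V_\alpha\}_{\alpha \in \Lambda}$ be $q$-commuting isometries. I would build their common unitary extension as a Hilbert-space inductive limit over the directed semigroup $\N^{(\Lambda)}$ of finitely supported multi-indices. For $n \le m$ in $\N^{(\Lambda)}$ take the connecting map $\HS_n \to \HS_m$ to be the isometry $V^{m-n} = \prod_\alpha V_\alpha^{(m-n)_\alpha}$ (evaluated in a fixed order and corrected by the unimodular phase dictated by $V_iV_j = q_{ij}V_jV_i$). On the completed limit $\mathcal{K}$ each $V_\alpha$ induces an invertible isometry $U_\alpha$, i.e.\ a unitary; the copy $\HS = \HS_0$ embeds isometrically and invariantly with $U_\alpha|_{\HS} = V_\alpha$, so the dilation identity holds at once; and the relations $U_iU_j = q_{ij}U_jU_i$ descend to the limit because they already hold at each level. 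The only real point to check is that the phase-corrected connecting maps are compatible under composition, i.e.\ that the scalars $q_{ij}$ satisfy the cocycle identity making $\N^{(\Lambda)} \to \mathcal{B}(\mathcal{K})$ well defined; this follows from $q_{ij} = q_{ji}^{-1}$ together with $|q_{ij}| = 1$.

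For case (3), the hypothesis $\sum_\alpha \|T_\alpha h\|^2 \le \|h\|^2$ says exactly that the column map $C : \HS \to \bigoplus_\alpha \HS$, $Ch = (T_\alpha h)_\alpha$, is a contraction, with $C^*C = \sum_\alpha T_\alpha^*T_\alpha$. Using its defect $D_C = (I - C^*C)^{1/2}$ I would write down a Schäffer-type minimal isometric dilation on $\HS$ together with an $\ell^2$-tail built from $\mathcal{D}_C = \overline{\operatorname{ran}}\,D_C$, with $\HS$ co-invariant and $T_\alpha = P_{\HS} W_\alpha|_{\HS}$, arranging the tail so that the isometries $\{W_\alpha\}$ again satisfy $W_iW_j = q_{ij}W_jW_i$. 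For case (2), doubly $q$-commuting contractions, I would produce the analogous isometric dilation by iterating the single-variable Sz.-Nagy dilation: the relation $T_iT_j^* = \overline{q}_{ij}T_j^*T_i$ forces $T_1^*T_1$, hence the defect $D_{T_1}$, to commute with every other $T_j$, so after dilating $T_1$ to its minimal isometry $W_1$ on $\HS \oplus \ell^2(\N,\mathcal{D}_{T_1})$ the remaining contractions lift diagonally to operators that still $q$-commute with $W_1$ and with one another and remain doubly $q$-commuting. Iterating over $\alpha$ yields a doubly $q$-commuting family of isometries dilating $\{T_\alpha\}$. In both cases, case (1) then upgrades the isometric dilation to the desired $q$-unitary dilation.

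The step I expect to be the main obstacle is not the analytic scaffolding, since inductive limits, Schäffer matrices and iterated Sz.-Nagy dilations are all classical, but the bookkeeping of the phases $q_{ij}$. In each construction one must verify that the unimodular scalars thread consistently through every identification, so that no anomalous phase spoils the relation $W_iW_j = q_{ij}W_jW_i$ (respectively $U_iU_j = q_{ij}U_jU_i$) on the enlarged space. Establishing this compatibility, essentially the cocycle condition encoding $q_{ij} = q_{ji}^{-1}$, rather than the mere existence of a dilation, is where the real work lies; it is also exactly what forces the construction to return a $\widetilde q$-unitary dilation with $\widetilde q = q$ rather than merely $\widetilde q \subseteq q \cup \{1\}$.
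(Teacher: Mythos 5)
First, a clarifying point: the paper you are working from contains no proof of Theorem \ref{thm_dcq} at all --- it is imported verbatim from \cite{PalII} (Theorem 5.2), where the argument runs through a Brehmer-type positivity criterion characterizing the existence of a \emph{regular} $q$-unitary dilation, with all three hypotheses (isometries, double $q$-commutativity, the column condition $\sum_\alpha \|T_\alpha h\|^2 \le \|h\|^2$) verified as instances of one positivity condition. Your case-by-case architecture --- build $q$-commuting isometric dilations in (2) and (3) and feed them into a unitary-extension theorem for (1) --- is therefore a genuinely different route, and for cases (1) and (2) it is viable. In particular, your key lemma in (2) is correct: from $T_1T_j = q_{1j}T_jT_1$ and $T_1T_j^* = \overline{q}_{1j}T_j^*T_1$ one gets $T_1^*T_1T_j = T_jT_1^*T_1$, hence $T_j$ commutes with $D_{T_1}$, and the phase-decorated diagonal lift $\widetilde{T}_j(h,(x_n)_n) = (T_jh, (\overline{q}_{1j}^{\,n}T_jx_n)_n)$ on the Schäffer space of $T_1$ preserves all the doubly $q$-commuting relations (including those involving $W_1^*$), so the iteration, with a direct limit over the steps, goes through.

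There are, however, two genuine gaps. The serious one is case (3): ``arranging the tail so that the isometries again satisfy $W_iW_j = q_{ij}W_jW_i$'' is not a construction --- it is the entire content of the theorem in that case. Already for $q_{ij}=1$, the passage from $\sum_\alpha\|T_\alpha h\|^2\le\|h\|^2$ to a \emph{commuting} isometric dilation is a nontrivial theorem of Brehmer/Sz.-Nagy (regular dilation theory), not a Schäffer matrix with a decorated tail: unlike in case (2), the joint defect $D_C$ and the individual defects do not commute with the other $T_j$'s in general, so no diagonal-type lift is available, and your proposal offers no substitute for the positivity argument that \cite{PalII} uses. The second gap is in case (1): the compatibility of the phase-corrected connecting maps does \emph{not} ``follow from $q_{ij}=q_{ji}^{-1}$ and $|q_{ij}|=1$.'' Composing normal-ordered monomials gives $W(a)W(b)=\gamma(a,b)W(a+b)$ for a bicharacter $\gamma$ built from the $q_{ij}$ (for two generators, $\gamma(a,b)=q_{21}^{a_2b_1}$), and the uncorrected maps $J_{p,n}=W(n-p)$ fail the compatibility $J_{n,m}J_{p,n}=J_{p,m}$ by exactly the scalar $\gamma(m-n,n-p)\neq 1$. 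The system can be repaired --- e.g.\ the choice $J_{p,n}:=\overline{\gamma(n-p,p)}\,W(n-p)$ works, since then the compatibility defect reduces, via multiplicativity of $\gamma$ in each slot, to $\overline{\gamma(n-p,p)}\;\overline{\gamma(m-n,p)}=\overline{\gamma(m-p,p)}$ --- but this is a concrete verification that your sketch asserts rather than performs, and it uses the bicharacter structure, not merely $q_{ij}=q_{ji}^{-1}$. With that lemma supplied and case (3) replaced by an actual argument (most plausibly the $q$-analogue of Brehmer positivity and a regular dilation theorem), the rest of your outline is sound.
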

 In order to find generalization of Proposition \ref{prop01} for a $q$-commuting tuple of $2 \times 2$ scalar matrix-contractions, we first classify such tuples into three categories upto similarity in Section \ref{sec_02}. Let us mention that two operator tuples $\underline{A}=(A_1, \dotsc, A_k)$ and $\underline{B}=(B_1, \dotsc, B_k)$ acting on a Hilbert space $\HS$ are said to be \textit{similar} if there is an invertible operator $P$ on $\HS$ such that $P^{-1}A_jP=B_j$ for $1 \leq j \leq k$. After completely classifying all $q$-commuting tuples of contractions on $\C^2$ into three types upto similarity, we prove in Section \ref{sec_03} that every $q$-commuting tuple of contractions that is unitarily equivalent to any of these three types, admits a $\widetilde{q}$-unitary dilation, where $\widetilde q \subseteq q \cup \{ 1 \}$. This is done in Theorem \ref{thm_dil_III} which is a main result of this article.
 
 \medskip
 
 Interestingly, the dilation issue is more subtle for an anti-commuting tuple of contractions on $\C^2$. In Theorem \ref{thm_ni}, we first prove that if atleast one among $T_1, \dotsc, T_k$ is a non-zero non-invertible contraction, then $\underline{T}$ admits a dilation to $q$-commuting $k$-tuple of unitaries. Consequently, we are only left to consider the case when $T_1, \dotsc, T_k$ are all invertible contractions. In this case, we have a bound on $k$ due to the following result. 

\begin{thm}[\cite{Hrubes}, Theorem 1]\label{thm_Hrubes}
Let $(T_1, \dotsc, T_k)$ be an anti-commuting tuple of $n \times n$ invertible matrices. Then $k \leq 2\log_2 n+1$. The bound is achieved if $n$ is a power of two. 
\end{thm}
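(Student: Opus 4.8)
The plan is to reduce to the case where each $T_i$ squares to the identity and then run a trace-orthogonality argument on products. First I would record the elementary but crucial observation that each $T_i^2$ is central in the algebra $\mathcal{A}=\langle T_1,\dots,T_k\rangle$: for $i\ne j$ one computes $T_i^2 T_j=T_i(-T_jT_i)=-(T_iT_j)T_i=T_jT_i^2$, so $T_i^2$ commutes with every generator. The difficulty is that $T_i^2$ need not be a scalar on all of $\C^n$. To handle this I would restrict to a minimal nonzero $\mathcal{A}$-invariant subspace $W\subseteq\C^n$. Each $T_i$ preserves $W$ and its restriction is injective, hence invertible on the finite-dimensional space $W$, and the anti-commutation relations persist. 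Since $W$ is an irreducible $\mathcal{A}$-module and $T_i^2$ is central, Schur's lemma forces $T_i^2|_W=c_iI_W$ with $c_i\ne 0$. Rescaling, I set $B_i:=c_i^{-1/2}\,T_i|_W$, so that $B_1,\dots,B_k$ are anti-commuting with $B_i^2=I_W$.

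Next I would bound $\dim W$ from below by exhibiting $2^{k-1}$ linearly independent elements of $\operatorname{End}(W)$. For $S\subseteq\{1,\dots,k\}$ let $B_S$ denote the ordered product of the $B_i$ with $i\in S$. Because $B_i^2=I_W$ and the $B_i$ anti-commute, one has $B_SB_{S'}=\pm B_{S\triangle S'}$ and $B_S^{-1}=\pm B_S$. The key computation is that $\operatorname{tr}(B_R)=0$ for every nonempty $R$ of even size: choosing $i\in R$, multiplicativity of conjugation gives $B_iB_RB_i^{-1}=(-1)^{|R|-1}B_R=-B_R$, and taking traces yields $\operatorname{tr}(B_R)=-\operatorname{tr}(B_R)$. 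Consequently, for even subsets $S\ne S'$ the set $S\triangle S'$ is nonempty and of even size, so $\operatorname{tr}(B_SB_{S'}^{-1})=\pm\operatorname{tr}(B_{S\triangle S'})=0$, while $\operatorname{tr}(B_SB_S^{-1})=\dim W\ne 0$. Thus the Gram matrix of the $2^{k-1}$ even monomials with respect to the pairing $(X,Y)\mapsto\operatorname{tr}(XY^{-1})$ is a nonzero scalar multiple of the identity, so these monomials are linearly independent in $\operatorname{End}(W)$. Hence $2^{k-1}\le(\dim W)^2\le n^2$, which rearranges to $k\le 2\log_2 n+1$.

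Finally, for the sharpness when $n=2^m$ I would produce $k=2m+1$ explicit anti-commuting invertible (indeed unitary) matrices using tensor products of the Pauli matrices $\sigma_x,\sigma_y,\sigma_z$: take $\gamma_{2j-1}=\sigma_z^{\otimes(j-1)}\otimes\sigma_x\otimes I^{\otimes(m-j)}$ and $\gamma_{2j}=\sigma_z^{\otimes(j-1)}\otimes\sigma_y\otimes I^{\otimes(m-j)}$ for $1\le j\le m$, together with $\gamma_{2m+1}=\sigma_z^{\otimes m}$, all acting on $(\C^2)^{\otimes m}=\C^{2^m}$. A direct check shows they pairwise anti-commute, giving $2m+1=2\log_2 n+1$ matrices and realizing equality. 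I expect the main obstacle to be the reduction to scalar squares: the trace argument is clean only once $B_i^2=I_W$, so the passage to an irreducible submodule and the use of Schur's lemma to eliminate the non-scalar central parts of $T_i^2$ is the step that makes the whole scheme work.
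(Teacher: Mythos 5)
This statement is imported verbatim from \cite{Hrubes} (Theorem 1) and the paper offers no proof of it, so there is no internal argument to compare against; judged on its own, your proof is correct and complete. The chain of steps all checks out: $T_i^2T_j=T_i(-T_jT_i)=T_jT_i^2$ makes each $T_i^2$ central; on a minimal $\mathcal{A}$-invariant subspace $W$ the restrictions remain invertible and anti-commuting, and Schur's lemma (the commutant of an irreducible family on a finite-dimensional complex space is $\C I$) gives $T_i^2|_W=c_iI_W$, so the normalized $B_i=c_i^{-1/2}T_i|_W$ satisfy $B_i^2=I_W$; conjugation by $B_i$ with $i\in R$ gives $B_iB_RB_i^{-1}=(-1)^{|R|-1}B_R$, hence $\operatorname{tr}(B_R)=0$ for nonempty even $R$; since the symmetric difference of two distinct even sets is nonempty and even, the $2^{k-1}$ even monomials are linearly independent in $\operatorname{End}(W)$, giving $2^{k-1}\leq(\dim W)^2\leq n^2$ and $k\leq 2\log_2 n+1$; and your Jordan--Wigner/Pauli family is indeed pairwise anti-commuting, unitary, and of size $2m+1$ on $\C^{2^m}$, so the bound is attained.

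Two minor remarks. First, your phrase ``Gram matrix with respect to the pairing $(X,Y)\mapsto\operatorname{tr}(XY^{-1})$'' is loose, since that pairing is not linear in $Y$; but the argument only uses linearity in $X$ (apply $X\mapsto\operatorname{tr}(XB_{S'}^{-1})$ to a vanishing combination $\sum_S\lambda_SB_S=0$ to extract $\lambda_{S'}\dim W=0$), so nothing breaks. Second, the step you flag as the main obstacle --- the reduction to scalar squares via an irreducible submodule --- can in fact be bypassed entirely: because each $T_i^2$ is central and invertible, one has $T_ST_{S'}^{-1}=\pm T_{S\triangle S'}C$ where $C$ is a central invertible product of factors $T_i^{\pm2}$, and the same conjugation trick shows $\operatorname{tr}(T_RC)=0$ for nonempty even $R$ and any central $C$. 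Hence the even monomials $T_S$ are already linearly independent in $M_n(\C)$ without Schur's lemma or square roots, which slightly shortens the proof; your version, while carrying this extra reduction, is equally valid.
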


It follows from the above theorem that an anti-commuting tuple $(T_1, \dotsc, T_k)$ of invertible contractions on $\C^2$ can have atmost $3$ contractions, i.e. $k\leq 3$. If $k = 2$, then the desired $q$-commuting unitary dilation was obtained by Keshari and Mallick in \cite{K.M.} which is stated below. Also, some of the results of \cite{K.M.} were further generalized in \cite{BPS-1}.

\begin{thm}[\cite{K.M.}, Theorem 2.3]\label{thm_KM}
Let $(T_1, T_2)$ be a $q$-commuting pair of contractions on a Hilbert space $\HS$. Then there exist a $q$-commuting pair of unitaries $(U_1, U_2)$ on a Hilbert space $\mathcal{K}$ and an isometry $V : \HS \to \mathcal{K}$ such that $T_1^mT_2^n=V^* U_1^mU_2^nV$ for all $m, n \in \N \cup \{0\}$.
\end{thm}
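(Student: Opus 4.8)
The plan is to establish a $q$-commuting analogue of Ando's theorem in two stages: first dilate the pair $(T_1,T_2)$ of $q$-commuting contractions to a pair of $q$-commuting \emph{isometries}, and then invoke Theorem \ref{thm_dcq}(1) to pass from $q$-commuting isometries to $q$-commuting unitaries. Since composing an isometric power dilation with a unitary power dilation again yields a unitary power dilation (the intertwining isometries simply compose), this reduces the whole statement to constructing a $q$-commuting isometric dilation $(W_1,W_2)$ on some $\mathcal{K}_1 \supseteq \HS$ with $T_1^m T_2^n = J^* W_1^m W_2^n J$ for an isometry $J:\HS \to \mathcal{K}_1$.

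For the isometric stage I would adapt Ando's original construction. Writing $D_i=(I-T_i^*T_i)^{1/2}$ and $\mathcal{D}_i=\overline{D_i\HS}$, the decisive observation is that the defect identity underlying Ando's proof survives verbatim in the $q$-commuting setting: for every $h\in\HS$,
\[
\|D_1 T_2 h\|^2+\|D_2 h\|^2=\|h\|^2-\|T_1 T_2 h\|^2, \qquad \|D_2 T_1 h\|^2+\|D_1 h\|^2=\|h\|^2-\|T_2 T_1 h\|^2,
\]
and since $T_1 T_2=q\,T_2 T_1$ with $|q|=1$ forces $\|T_1 T_2 h\|=\|T_2 T_1 h\|$, the two right-hand sides coincide. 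Consequently the assignment $(D_1 T_2 h,\,D_2 h)\mapsto(D_2 T_1 h,\,D_1 h)$ extends to a unitary $U$ between the relevant defect spaces (after the standard enlargement to equate multiplicities if needed), exactly as in the commuting case.

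I would then build the two operators on $\mathcal{K}_1=\HS\oplus\bigoplus_{n=0}^{\infty}(\mathcal{D}_1\oplus\mathcal{D}_2)$ following Ando's block-shift recipe, but with carefully placed unimodular factors of $q$. On the $\HS$-component each $W_i$ acts as $T_i$ together with its defect into the first copy of $\mathcal{D}_i$, while on the tail it acts as the shift intertwined by $U$, with suitable powers of $q$ inserted in the off-diagonal blocks so that the target relation becomes $W_1 W_2=q\,W_2 W_1$ rather than literal commutation. Checking that $W_1,W_2$ are genuine isometries, that they satisfy this twisted relation, and that $J^*W_1^m W_2^n J=T_1^m T_2^n$ for the inclusion $J$ of $\HS$ is the careful but essentially mechanical part.

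The main obstacle is precisely the bookkeeping of the scalar $q$. Because $q$ enters the commutation relation asymmetrically---one has $T_1 T_2=q\,T_2 T_1$ but no relation between $T_1$ and $T_2^*$ unless the pair is doubly $q$-commuting---the factors of $q$ cannot be symmetrized away, and one must verify that inserting them into the blocks of $W_1$ and $W_2$ simultaneously preserves the isometry property and produces exactly $W_1 W_2=q\,W_2 W_1$. Once this $q$-commuting isometric dilation is in hand, Theorem \ref{thm_dcq}(1) supplies $q$-commuting unitaries $(U_1,U_2)$ dilating $(W_1,W_2)$, and composing the two embedding isometries gives the desired $V$ with $T_1^m T_2^n=V^*U_1^m U_2^n V$ for all $m,n\in\N\cup\{0\}$.
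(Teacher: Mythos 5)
The paper gives no in-house proof of Theorem \ref{thm_KM} --- it is imported verbatim from Keshari--Mallick \cite{K.M.} --- and your outline follows essentially the same route as that cited proof: Ando's two-stage scheme, with the defect identity $\|D_1T_2h\|^2+\|D_2h\|^2=\|h\|^2-\|T_1T_2h\|^2=\|h\|^2-\|T_2T_1h\|^2=\|D_2T_1h\|^2+\|D_1h\|^2$ surviving precisely because $|q|=1$, followed by Theorem \ref{thm_dcq}(1) to pass from $q$-commuting isometries to $q$-commuting unitaries. The one step you defer (placement of the $q$-factors) is handled in \cite{K.M.} exactly as you anticipate --- the unimodular scalar is absorbed into the Ando pairing unitary acting on the tail blocks, which preserves unitarity and turns the commutation check into $W_1W_2=qW_2W_1$ --- so your plan is sound and matches the source.
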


Thus, it suffices to consider only anti-commuting triples of invertible contractions. In Theorem \ref{thm_antiII}, which is another main result of this article, we prove that for an anti-commuting triple of invertible contractions $(T_1,T_2,T_3)$ on $\C^2$, there exist an anti-commuting triple of unitaries $(U_1, U_2, U_3)$ and a non-zero scalar $\beta$ such that $(\beta U_1, U_2, U_3)$ dilates $(T_1, T_2, T_3)$. Moreover, such a triple $(T_1,T_2,T_3)$ dilates to a triple of anti-commuting unitaries if $\|T_1\| \leq \|T_2T_3\|$.

\medskip	

\section{The $ 2\times 2$ anti-commuting contractive scalar matrices}\label{sec_04}

\medskip

\noindent Let $T_1, \dots , T_k$ be $q$-commuting contractions on $\C^2$ and let $T_{i}T_j \neq 0$ for atleast one pair $(T_i, T_j)$ with $i \ne j$. If every $q_{ij}$ in the family $q$ is same, i.e. there is some $q_0 \in \T$ such that $T_iT_j=q_0T_jT_i$ for $1 \leq i< j \leq k$, then the only non-trivial possibilities are $q_0=1$ or $q_0=-1$. The $q_0=1$ case, i.e. the commutative case, has been studied by Holbrook in \cite{Holbrook}. In this Section, we study the $q_0=-1$ case, i.e. the anti-commuting case and obtain some dilation results for such a tuple. In this direction, Sebesty\'{e}n \cite{Seb} proved that an anti-commuting pair of contractions on a Hilbert space admits a dilation to an anti-commuting pair of unitaries. Keshari and Mallick \cite{K.M.} generalized this result to any $q$-commuting pair of contractions as mentioned in Theorem \ref{thm_KM}. We first explain below the reason behind considering only non-zero contractions in a $q$-commuting tuple when searching for dilation.
	
\begin{rem}\label{rem_1}
 Let $\underline{T}=(T_1, \dotsc, T_k)$ be a $q$-commuting tuple of contractions acting on a Hilbert space $\mathcal{H}$ and let $\underline{T}$ consist of some zero operators. Without loss of generality let us write $\underline{T}=(T_1, \dotsc, T_m,T_{m+1}, \dotsc, T_k)$, where $T_1, \dotsc, T_m$ are non-zero for some $m \in \{1, \dotsc, k\}$ and $T_{m+1}=\dotsc=T_k=0$. Evidently, $\underline{T}'=(T_1, \dotsc, T_m)$ is a $q'$-commuting tuple of non-zero contractions where $q'=\{q_{ij} : 1 \leq i<j \leq m \}$. We show that it suffices to admit a $q'$-unitary dilation of $\underline{T}'$ to obtain a $q$-unitary dilation of $\underline{T}$. Indeed, if $\underline{U}'=(U_1, \dotsc, U_m)$ acting on a space $\mathcal{K}'$ is a $q'$-unitary dilation of $\underline{T}'$, then $\underline{U}=(U_1, \dotsc, U_m, U_{m+1}, \dotsc, U_k)$ with $U_{m+1}=\dotsc =U_k=0$ is a doubly $q$-commuting tuple of normal contractions on $\mathcal{K}$ that dilates $\underline{T}$. One can now apply Theorem \ref{thm_dcq} on $\underline{U}$ to obtain the desired $q$-unitary dilation for $\underline{T}$. 
\end{rem}
		
From now onwards, we assume that any $q$-commuting tuple $(T_1, \dotsc, T_k)$ consists of non-zero operators. We would like to reach the general case step by step beginning with the following theorem.

\begin{thm}\label{lem_nil}
An anti-commuting $k$-tuple of contractions on $\C^2$ consisting of atleast one non-zero nilpotent matrix admits dilation to a $q$-commuting $k$-tuple of unitaries with $q\subseteq \{\pm 1\}$.
\end{thm}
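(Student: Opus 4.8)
The plan is to pin the tuple down to a rigid upper-triangular normal form, split into two cases according to whether a second matrix can fail to be nilpotent, dispose of the commuting case by Holbrook's theorem, and reduce the remaining case to the Keshari--Mallick pair dilation together with an explicit extension. First I would use the given nonzero nilpotent, say $T_1$. A nonzero $2\times 2$ nilpotent satisfies $T_1^2=0$ and has one-dimensional kernel, so choosing an orthonormal basis whose first vector spans $\ker T_1$ makes $T_1=\left(\begin{smallmatrix}0&a\\0&0\end{smallmatrix}\right)$ with $a\neq0$, $|a|\le1$; conjugating every $T_j$ by this unitary preserves contractivity and all anti-commutation relations, so there is no loss. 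Writing $N=\left(\begin{smallmatrix}0&1\\0&0\end{smallmatrix}\right)$ and reading off $T_1T_j=-T_jT_1$ entrywise forces each $T_j$ to be upper triangular with zero trace, $T_j=\left(\begin{smallmatrix}x_j&y_j\\0&-x_j\end{smallmatrix}\right)$, and then imposing $T_iT_j=-T_jT_i$ on two such matrices collapses to the single scalar relation $x_ix_j=0$. Hence at most one of $x_2,\dots,x_k$ is nonzero (while $x_1=0$), giving a clean dichotomy: either (A) every $T_j$ is strictly upper triangular, i.e.\ a scalar multiple $c_jN$ of $N$, or (B) exactly one matrix, which after relabelling I take to be $T_2$, has nonzero diagonal, hence is invertible with eigenvalues $\pm x_2$, while $T_1,T_3,\dots,T_k$ are scalar multiples of $N$. (Relabelling is harmless because anti-commutation changes ordered products only by signs in $\{\pm1\}$, which the target unitaries can absorb.)

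In case (A) any two of the matrices satisfy $T_iT_j=c_ic_jN^2=0=T_jT_i$, so the tuple is in fact \emph{commuting}. Holbrook's Proposition~\ref{prop01} then produces a commuting $k$-tuple of unitaries dilating $\underline{T}$, and a commuting tuple is $q$-commuting with every $q_{ij}=1\in\{\pm1\}$; this finishes (A). Case (B) is the substantive one, and the key simplification is that the surviving moments are very sparse: since $N^2=0$ and $NT_2^{m}N=0$ for all $m$, any ordered monomial $T_1^{m_1}\cdots T_k^{m_k}$ containing two factors from the nilpotent family, or any $m_j\ge2$ with $j\neq2$, vanishes. Thus the only nonzero moments are the powers $T_2^{m}$ and the single-nilpotent products $T_1T_2^{m}=a(-x_2)^{m}N$ and $T_2^{m}T_j=c_jx_2^{m}N$ for $j\ge3$. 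I would therefore begin from the Keshari--Mallick dilation (Theorem~\ref{thm_KM}) of the anti-commuting pair $(T_1,T_2)$, obtaining anti-commuting unitaries $(U_1,U_2)$ on a space $\mathcal{K}$ and an isometry $V$ that already reproduces the moments $T_2^{m}$ and $T_1T_2^{m}$; anti-commutation of $U_1,U_2$ makes this automatically consistent with $T_2^{m}T_1=ax_2^{m}N$.

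It then remains to manufacture, on $\mathcal{K}$ or a suitable enlargement, unitaries $U_3,\dots,U_k$ that anti-commute with $U_2$, commute with $U_1$ and with one another, and satisfy $V^*U_2^{m}U_jV=c_jx_2^{m}N$ while all multi-nilpotent moments remain zero; the resulting $q$-relations are forced to be $q_{2j}=-1$ and $q_{jj'}=+1$, both in $\{\pm1\}$, which is exactly the asserted conclusion. The hard part will be precisely this simultaneous construction. One cannot simply set $U_j=(c_j/a)U_1$, because $|c_j/a|$ need not be $1$, and a naive tensor-with-Pauli gadget, though it does enforce the correct $\pm1$ commutation pattern, destroys the operator-product moment matching. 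The real work is to exhibit a single dilation space carrying one fixed $U_2$ and one fixed isometry $V$ against which all the proportional nilpotents can be dilated at once by mutually commuting unitaries anti-commuting with $U_2$; what makes the bookkeeping tractable is exactly that each nonzero monomial meets the nilpotent family in at most one factor, so the verification reduces to the finite list of sparse moment identities above together with the pairwise commutation checks, which I would carry out as the final, routine step.
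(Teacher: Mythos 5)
Your reduction to the structured form is exactly right and coincides with the paper's: after a unitary basis change $T_1=aN$, anti-commutation with $T_1$ forces each $T_j$ to be upper triangular with trace zero, pairwise anti-commutation collapses to $x_ix_j=0$ so that at most one matrix has a nonzero diagonal, and the all-nilpotent case is commuting and handled by Holbrook's Proposition~\ref{prop01}. Invoking Theorem~\ref{thm_KM} on the pair formed by the distinguished matrix and one nilpotent is also the paper's starting point. But your argument stops exactly where the substantive work begins: you concede that manufacturing $U_3,\dots,U_k$ is ``the hard part'' and defer it as a construction to ``carry out as the final, routine step,'' without exhibiting it. That construction \emph{is} the theorem in case (B); leaving it as an intention is a genuine gap, not a routine verification.

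Moreover, the obstruction you flag --- that $U_j=(c_j/a)U_1$ fails to be unitary --- is the wrong worry, and it obscures the device that makes the step easy. The paper never builds the unitaries $U_3,\dots,U_k$ directly. Instead it normalizes within the nilpotent family by the element $T_n$ of maximal norm, so that $T_j=w_jT_n$ with $|w_j|\le 1$ for all $j\ne m$; applies Theorem~\ref{thm_KM} to the pair $(T_n,T_m)$ to obtain anti-commuting unitaries $(U_n,U_m)$ and an isometry $V$ with $T_n^sT_m^t=V^*U_n^sU_m^tV$ for all $s,t\ge 0$; and sets $N_j=w_jU_n$ for $j\ne m$, $N_m=U_m$. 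This $\underline{N}$ consists only of normal \emph{contractions}, but it is doubly $q$-commuting with $q\subseteq\{\pm 1\}$ and dilates $\underline{T}$ automatically: your entire list of ``sparse moment identities,'' including the vanishing multi-nilpotent monomials, needs no separate bookkeeping, since every moment identity (e.g.\ $V^*w_jw_{j'}U_n^2V=w_jw_{j'}T_n^2=0$) is an instance of the single Keshari--Mallick identity above. The passage from $\underline{N}$ to a genuine $q$-commuting tuple of unitaries is then part-(2) of Theorem~\ref{thm_dcq} applied to this doubly $q$-commuting family --- a tool available in the paper that your sketch never invokes. Note also that if you insist on the pair $(T_1,T_2)$ rather than normalizing by the maximal-norm nilpotent, the multipliers $c_j/a$ may exceed $1$ in modulus and the intermediate tuple would not even consist of contractions, so the normalization is not cosmetic. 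Without this two-stage device (contractive normal intermediate tuple plus the regular $q$-unitary dilation theorem), or some substitute for it, your proposal does not establish the statement.
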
	

\begin{proof}
Let $\underline{T}=(T_1, \dotsc, T_k)$ be an anti-commuting tuple of contractions acting on $\C^2$. Without loss of generality (as explained in Remark \ref{rem_1}), let us assume that each $T_j$ is a non-zero contraction and that $T_1$ is a nilpotent matrix. Since $T_1$ is nilpotent, $T_1$ has the following upper triangular form with respect to an appropriate orthonormal basis of $\C^2$ and also let us assume that the forms of $T_2, \dots, T_k$ with respect to the same basis be
	\[
	T_1=\begin{pmatrix}
		0 & d_1\\
		0 & 0\\
	\end{pmatrix} \quad \mbox{and} \quad 
	T_j=\begin{pmatrix}
		c_j & d_j\\
		e_j & f_j\\
	\end{pmatrix} \quad (2\leq j \leq k).
	\]
For $1<j \leq k$, it is easy to see that $T_1T_j=-T_jT_1$ if and only if $e_j=0$ and $c_j=-f_j$. Then
\[
	T_1=\begin{pmatrix}
		0 & d_1\\
		0 & 0\\
	\end{pmatrix} \quad \mbox{and} \quad 
	T_j=\begin{pmatrix}
		c_j & d_j\\
		0 & -c_j\\
	\end{pmatrix} \quad (1<j \leq k).
	\]
If all $c_j=0$ for $1< j \leq k$, then $T_iT_j=0=T_jT_i$ for $1 \leq i, j \leq k$ and the desired conclusion follows from Proposition \ref{prop01}. Assume that there is some $m \in \{2, \dotsc, k\}$ such that $c_m \ne 0$.	Note that
\[
T_jT_m+T_mT_j=\begin{pmatrix}
		c_j & d_j\\
		0 & -c_j\\
	\end{pmatrix}\begin{pmatrix}
		c_m & d_m\\
		0 & -c_m\\
	\end{pmatrix}+\begin{pmatrix}
		c_m & d_m\\
		0 & -c_m\\
	\end{pmatrix}\begin{pmatrix}
		c_j & d_j\\
		0 & -c_j\\
	\end{pmatrix}=\begin{pmatrix}
		2c_jc_m & 0\\
		0 & 2c_jc_m\\
	\end{pmatrix}
\]
for $1 < j \leq k$ and $j \ne m$. Hence, $T_jT_m=-T_mT_j$ if and only if $c_j=0$ for $1 < j \leq k$ and $j \ne m$. Putting everything together, we have that
\[
T_j=\begin{pmatrix}
		0 & d_j\\
		0 & 0\\
	\end{pmatrix} \quad \mbox{and} \quad 
	T_m=\begin{pmatrix}
		c_m & d_m\\
		0 & -c_m\\
	\end{pmatrix} \quad (1 \leq j \leq k \ \& \ j \ne m).
	\]
Let $n \in \{1, \dotsc, k\}\setminus \{m\}$ be such that $|d_n|=\max\{|d_j| : 1 \leq j \leq k \ \& \ j \ne m\}$. Since each $T_j$ is non-zero, we have that $d_n \neq 0$. For $j \in \{1, \dotsc, k\}\setminus \{m\}$, we define $w_j=d_jd_n^{-1}$ and have
\begin{equation}\label{eqn_318}
T_j=w_jT_n.
\end{equation}
It follows from Theorem \ref{thm_KM} that there exist an anti-commuting pair of unitaries $(U_n, U_m)$ on a Hilbert space $\mathcal{K}$ and an isometry $V: \C^2 \to \mathcal{K}$ such that 
\begin{equation}\label{eqn_319}
T_n^sT_m^t=V^* U_n^sU_m^tV
\end{equation}
for all integers $s, t \geq 0$. Define an operator tuple $\underline{N}=(N_1, \dotsc, N_k)$ as follows:		
\[
		N_j=\left\{
		\begin{array}{ll}
				w_jU_n & \mbox{if} \ 1 \leq j \leq k \ \& \ j \ne m, \\ 
			U_m & \mbox{if} \ j=m. 
		\end{array} 
		\right.
		\]
It is evident that $\underline{N}$ is a $q$-commuting tuple of normal contractions on $\mathcal{K}$. Indeed, we have that
$N_iN_j=q_{ij}N_jN_i$ and $N_iN_j^*=\overline{q}_{ij}N_j^*N_i$ with $q_{ij}=\pm 1$ for $1 \leq i<j \leq k$ since $(U_n, U_m)$ is an anti-commuting pair of unitaries. For non-negative integers $s_1, \dotsc, s_k$, we have by  (\ref{eqn_318}) \& (\ref{eqn_319}) that 
\begin{equation*}
\begin{split}
V^*\left(N_m^{s_m}\prod_{ j \ne m}N_j^{s_j}\right)V
=V^*\left(U_m^{s_m}\prod_{ j \ne m}w_j^{s_j}U_n^{s_j} \right)V
=T_m^{s_m}\prod_{ j \ne m}w_j^{s_j}T_n^{s_j} 
=T_m^{s_m}\prod_{ j \ne m}T_j^{s_j}
\end{split}
\end{equation*}
and so, $\underline{N}$ dilates $\underline{T}$. The desired conclusion now follows from Theorem \ref{thm_dcq}.
\end{proof}

\begin{thm}\label{thm_ni}
An anti-commuting $k$-tuple of contractions on $\C^2$ consisting of atleast one non-zero non-invertible contraction admits a dilation to $q$-commuting $k$-tuple of unitaries with $q \subseteq \{\pm 1\}$.
\end{thm}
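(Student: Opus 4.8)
The plan is to reduce everything to Theorem \ref{lem_nil} by showing that, under the hypotheses, the distinguished non-invertible matrix is forced to be nilpotent once $k \ge 3$. Recall that a non-zero non-invertible matrix on $\C^2$ has rank one, hence determinant $0$, so its eigenvalues are $0$ and its trace; consequently it is either nilpotent (trace $0$) or diagonalizable with eigenvalues $0$ and some $\lambda \ne 0$ (trace $\lambda$). First I would dispose of the small cases. If $k=1$, the single contraction dilates to a unitary by Sz.-Nagy's theorem, and if $k=2$, the anti-commuting pair is a $q$-commuting pair with $q=\{-1\}$ and dilates by Theorem \ref{thm_KM}; in both cases the dilating unitaries satisfy $q \subseteq \{\pm 1\}$. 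So I may assume $k \ge 3$ and, by Remark \ref{rem_1}, that every $T_j$ is non-zero.

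Let $T_1$ denote a non-zero non-invertible contraction in the tuple (after relabeling). If $T_1$ is nilpotent, the conclusion is immediate from Theorem \ref{lem_nil}, so the heart of the argument is to rule out the diagonalizable case. Suppose then that $T_1$ is diagonalizable with eigenvalues $0$ and $\lambda \ne 0$, and choose an invertible $P$ with $P^{-1}T_1P = \operatorname{diag}(\lambda, 0)$. Writing $\widetilde T_j = P^{-1}T_jP$, the tuple $(\widetilde T_1, \dotsc, \widetilde T_k)$ is again anti-commuting with all entries non-zero, since both properties are preserved under similarity. A direct computation of the relation $\widetilde T_1 \widetilde T_j + \widetilde T_j \widetilde T_1 = 0$ then forces each $\widetilde T_j$ with $j \ge 2$ to have the diagonal form $\operatorname{diag}(0, e_j)$ with $e_j \ne 0$.

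The key obstruction now appears: for distinct $i, j \ge 2$, the diagonal matrices $\widetilde T_i$ and $\widetilde T_j$ commute, so the anti-commutation relation $\widetilde T_i \widetilde T_j = - \widetilde T_j \widetilde T_i$ yields $e_i e_j = 0$, contradicting $e_i, e_j \ne 0$. Hence there can be at most one index exceeding $1$, i.e. $k \le 2$, which contradicts $k \ge 3$. Therefore a non-zero non-invertible matrix in an anti-commuting $k$-tuple of non-zero contractions with $k \ge 3$ must be nilpotent, and the desired dilation follows from Theorem \ref{lem_nil}.

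I expect the only delicate point to be the bookkeeping around the diagonalizing similarity: the matrix $P$ need not be unitary, so it must be used purely to extract the algebraic structure of the tuple (which is similarity-invariant), while the actual $\widetilde q$-unitary dilation is always produced for the original tuple through Theorem \ref{lem_nil} (or Theorem \ref{thm_KM} in the case $k=2$), never for the conjugated one.
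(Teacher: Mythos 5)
Your proof is correct, but it takes a genuinely different route from the paper in the decisive case. The paper does not split on $k$: writing the non-invertible $T_1$ upper-triangularly as $\bigl(\begin{smallmatrix} c_1 & d_1 \\ 0 & 0 \end{smallmatrix}\bigr)$, it handles $c_1=0$ via Theorem \ref{lem_nil} and, when $c_1\neq 0$, uses a trace argument to force every $T_j$ to be non-invertible, computes from the anti-commutation relations that $T_j=f_jA$ for a fixed matrix $A=\bigl(\begin{smallmatrix} 0 & -d_1c_1^{-1} \\ 0 & 1 \end{smallmatrix}\bigr)$, hence $T_j=w_jT_m$ for a suitably maximal $m$, and then \emph{constructs} the dilation: Theorem \ref{thm_KM} applied to the pair $(T_1,T_m)$ yields anti-commuting unitaries $(U_1,U_m)$, the tuple $(U_1,w_2U_m,\dotsc,w_kU_m)$ is a doubly $q$-commuting tuple of normal contractions dilating $\underline{T}$, and Theorem \ref{thm_dcq} finishes. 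You instead prove a rigidity statement: for $k\geq 3$ with all entries non-zero, the diagonalizable (trace-nonzero) case is impossible, since anti-commutation with $\operatorname{diag}(\lambda,0)$ forces $\widetilde T_j=\operatorname{diag}(0,f_j)$ with $f_j\neq 0$, and any two such matrices commute, so $f_if_j=0$ gives a contradiction; thus $T_1$ is nilpotent and Theorem \ref{lem_nil} applies to the original tuple (correctly noting nilpotency is basis-free, so the non-unitary similarity $P$ is only used to extract structure), with $k\leq 2$ dispatched by Sz.-Nagy and Theorem \ref{thm_KM}. Your obstruction is sound and in fact shows that the paper's $c_1\neq 0$ branch is vacuous for $k\geq 3$ under the standing non-zeroness assumption --- one can see this inside the paper's own formulas, since $T_iT_j+T_jT_i=2f_if_jA$ with $A^2=A\neq 0$ forces $f_if_j=0$ for $2\leq i<j\leq k$. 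What each approach buys: the paper's construction is uniform in $k$ and needs no case split, while yours is shorter in the main case, isolates a structural fact in the spirit of Theorem \ref{thm_Hrubes}, and for $k=2$ even delivers the stronger genuinely anti-commuting unitary dilation. One presentational nit, not a gap: pruning zeros via Remark \ref{rem_1} may reduce a tuple with $k\geq 3$ to length $1$ or $2$, so your small-case analysis should be read as applying to the pruned tuple, with Remark \ref{rem_1} lifting the dilation back to the full one.
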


\begin{proof}
Let $\underline{T}=(T_1, \dotsc, T_k)$ be an anti-commuting tuple of contractions on $\C^2$. Without loss of generality, we assume that $T_1$ is a non-zero non-invertible contraction. Then, with respect to an appropriate orthonormal basis of $\C^2$, we have that
	\[
	T_1=\begin{pmatrix}
		c_1 & d_1\\
		0 & 0\\
	\end{pmatrix} \quad \mbox{and} \quad 
	T_j=\begin{pmatrix}
		c_j & d_j\\
		e_j & f_j\\
	\end{pmatrix} \quad (1< j \leq k).
	\]
If $c_1=0$, then the desired conclusion follows from Theorem \ref{lem_nil}. We assume that $c_1 \ne 0$. There are two possible cases depending on the invertibility of $T_2, \dotsc, T_k$. Suppose there is some $j \in \{2, \dotsc, k\}$ such that $T_j$ is invertible. Then $T_1=-T_jT_1T_j^{-1}$ and so, $tr(T_1)=0$	which is a contradiction, because $tr(T_1)=c_1 \ne 0$. Consequently, $T_2, \dotsc, T_k$ are non-invertible. For $2 \leq j \leq k$, we have that 
\[
T_jT_1+T_1T_j=\begin{pmatrix}
		c_1c_j & d_1c_j\\
		c_1e_j & d_1e_j\\
	\end{pmatrix}+\begin{pmatrix}
		c_1c_j+d_1e_j & c_1d_j+d_1f_j\\
		0 & 0\\
	\end{pmatrix}=\begin{pmatrix}
		2c_1c_j+d_1e_j & d_1c_j+c_1d_j+d_1f_j\\
		c_1e_j & d_1e_j\\
	\end{pmatrix}.
\]
Hence, $T_1T_j=-T_jT_1$ holds if and only if $e_j=c_j=0$ and $d_j=-d_1f_jc_1^{-1}$ for $ 2 \leq j \leq k$. Then
\[
	T_1=\begin{pmatrix}
		c_1 & d_1\\
		0 & 0\\
	\end{pmatrix} \quad \mbox{and} \quad 
	T_j=f_j \ A, \quad \text{where} \quad A=\begin{pmatrix}
		0 & -d_1c_1^{-1}\\
		0 & 1\\
	\end{pmatrix} \quad (2 \leq j \leq k).
	\]
To avoid the trivial case, we assume that there exists $j \in \{2, \dotsc, k\}$ such that $f_j \ne 0$. Choose $m \in \{2, \dotsc, k\}$ such that $|f_m|=\max\{|f_j| : 2 \leq j \leq k\}$. Then 
\begin{equation}\label{eqn_320}
T_j=w_jT_m \quad \text{where} \quad w_j=f_jf_m^{-1} \quad (2 \leq j \leq k).
\end{equation} 
Applying Theorem \ref{thm_KM} on the anti-commuting pair $(T_1, T_m)$, we have that there is an anti-commuting pair of unitaries $(U_1, U_m)$ on a Hilbert space $\mathcal{K}$ and an isometry $V: \C^2 \to \mathcal{K}$ such that 
\begin{equation}\label{eqn_321}
T_1^sT_m^t=V^* U_1^sU_m^tV
\end{equation}
for all integers $s, t \geq 0$. Define an operator tuple $\underline{N}=(N_1, \dotsc, N_k)$ as follows:		
\[
		N_j=\left\{
		\begin{array}{ll}
				U_1 & \mbox{if} \ j =1, \\ 
			w_jU_m & \mbox{if} \ 2 \leq j \leq k. 
		\end{array} 
		\right.
		\]
It is evident that $\underline{N}$ is a $q$-commuting tuple of normal contractions on $\mathcal{K}$. Indeed, we have that
$N_iN_j=q_{ij}N_jN_i$ and $N_iN_j^*=\overline{q}_{ij}N_j^*N_i$ with $q_{ij}=\pm 1$ for $1 \leq i<j \leq k$ since $(U_1, U_m)$ is an anti-commuting pair of unitaries. For non-negative integers $s_1, \dotsc, s_k$, we have by  (\ref{eqn_320}) \& (\ref{eqn_321}) that 
\begin{equation*}
\begin{split}
V^*\left(N_1^{s_1}N_2^{s_2} \dotsc N_k^{s_k}\right)V
=w_2^{s_2}\dots w_k^{s_k}V^*\left(U_1^{s_1}U_m^{s_2+\dotsc +s_k}\right)V
=w_2^{s_2}\dots w_k^{s_k} T_1^{s_1}T_m^{s_2+\dotsc +s_k}
=T_1^{s_1}T_2^{s_2} \dotsc T_k^{s_k}
\end{split}
\end{equation*}
and so, $\underline{N}$ dilates $\underline{T}$. The desired conclusion now follows from Theorem \ref{thm_dcq}.
\end{proof}

The only remaining case that is to be discussed here (for the dilation of an anti-commuting tuple of contractions on $\C^2$) is when each contraction in the tuple is invertible. It follows from Theorem \ref{thm_Hrubes} that if a $k$-tuple of anti-commuting contractions $(T_1, \dots , T_k)$ on $\C^2$ consists of only invertible operators, then $k \leq 3$. Below we write it as a proposition.
\begin{prop} \label{prop:Sec-2-04}
If $(T_1, \dots , T_k)$ on $\C^2$ is a $k$-tuple of anti-commuting invertible matrices, then $k\leq 3$.
\end{prop}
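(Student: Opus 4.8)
The quickest route is simply to specialise Theorem~\ref{thm_Hrubes} to $n=2$: it gives $k\le 2\log_2 2+1=3$, which is exactly the assertion. Nevertheless, for a $2\times 2$ problem one can give a short self-contained argument, which I would prefer to record.

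Assume $k\ge 2$ (the cases $k\le 1$ being trivial). First I would show that every $T_j$ is traceless. Fix $j$ and pick $i\ne j$; from $T_iT_j=-T_jT_i$ and the invertibility of $T_i$ we get $T_j=-T_iT_jT_i^{-1}$, so $\mathrm{tr}(T_j)=-\mathrm{tr}(T_iT_jT_i^{-1})=-\mathrm{tr}(T_j)$ and hence $\mathrm{tr}(T_j)=0$. By the Cayley--Hamilton theorem a traceless $2\times 2$ matrix satisfies $T_j^2=-\det(T_j)\,I$, and since $T_j$ is invertible the scalar $\lambda_j:=-\det(T_j)$ is nonzero; thus $T_j^2=\lambda_j I$ with $\lambda_j\ne 0$.

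The key step is a dimension count using the nondegenerate symmetric bilinear form $\langle A,B\rangle:=\mathrm{tr}(AB)$ on the $4$-dimensional space $M_2(\C)$. I claim the family $\{I,T_1,\dots,T_k\}$ is mutually orthogonal with nonzero self-pairings: indeed $\langle I,I\rangle=2$, $\langle I,T_j\rangle=\mathrm{tr}(T_j)=0$, $\langle T_i,T_j\rangle=\mathrm{tr}(T_iT_j)=-\mathrm{tr}(T_jT_i)=-\mathrm{tr}(T_iT_j)=0$ for $i\ne j$, and $\langle T_j,T_j\rangle=\mathrm{tr}(T_j^2)=2\lambda_j\ne 0$. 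A standard argument then forces linear independence: if $a_0 I+\sum_j a_j T_j=0$, pairing with any one member of the family kills all but a single term and, the corresponding self-pairing being nonzero, forces that coefficient to vanish. Hence $\{I,T_1,\dots,T_k\}$ is linearly independent in $M_2(\C)$, so $k+1\le 4$, i.e.\ $k\le 3$.

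There is no serious obstacle here; the only point needing care is that $\langle\cdot,\cdot\rangle$ is a symmetric bilinear form over $\C$ rather than a genuine inner product, so one must use the coefficient-extraction argument above (valid precisely because each self-pairing is nonzero) rather than appeal to positive-definiteness. The essential structural input that makes the bound sharp for $n=2$ is that conjugation-invariance of the trace forces all generators to be traceless, which collapses the would-be $2^{k}$-dimensional Clifford-type span into the $4$-dimensional matrix algebra and thereby caps $k$ at $3$.
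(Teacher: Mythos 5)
Your proposal is correct, and its first sentence is precisely the paper's proof: the authors obtain Proposition~\ref{prop:Sec-2-04} by specialising Theorem~\ref{thm_Hrubes} to $n=2$, giving $k\le 2\log_2 2+1=3$, with nothing further. Your self-contained argument is a genuinely different and sound alternative. The tracelessness step is exactly the observation the paper itself uses later (in the proof of Proposition~\ref{prop_anti}: $\mathrm{tr}(B)=-\mathrm{tr}(A^{-1}BA)=-\mathrm{tr}(B)$ for anti-commuting $A,B$ with $A$ invertible), and your Cayley--Hamilton consequence $T_j^2=-\det(T_j)I$ with $\det(T_j)\ne 0$, together with the trace-form orthogonality $\mathrm{tr}(T_iT_j)=0$ for $i\ne j$ and $\mathrm{tr}(T_j^2)=2\lambda_j\ne 0$, correctly forces $\{I,T_1,\dots,T_k\}$ to be linearly independent in the $4$-dimensional algebra $M_2(\C)$; your coefficient-extraction argument is the right substitute for positive-definiteness, since $\mathrm{tr}(AB)$ is merely a nondegenerate symmetric bilinear form over $\C$. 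What each approach buys: the paper's citation is a one-liner and situates the bound within Hrubes's general result (including sharpness for $n$ a power of two), whereas your argument is elementary, keeps the paper self-contained on this point, and makes visible the structural facts (each $T_j$ traceless with scalar square, pairwise trace-orthogonal) that underlie the Clifford-type dimension count; indeed it generalises verbatim to $n\times n$ matrices to give $k+1\le n^2$, though it is weaker than Hrubes's logarithmic bound for $n>2$. The only hypotheses you use, $k\ge 2$ and invertibility, are handled correctly.
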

So, Proposition \ref{prop:Sec-2-04} tells us that it suffices to consider only anti-commuting triples consisting of invertible contractions on $\C^2$. Our first step in this direction is the following proposition.

\begin{prop}\label{prop_anti}
An anti-commuting triple of invertible contractions on $\C^2$ consisting of atleast one normal contraction admits a dilation to an anti-commuting triple of unitaries.
\end{prop}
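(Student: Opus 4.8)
The plan is to reduce everything to the doubly $q$-commuting situation already settled in Theorem~\ref{thm_dcq}, by manufacturing on a dilation space a doubly anti-commuting triple of \emph{normal} contractions that dilates $(T_1,T_2,T_3)$. First I would normalise the tuple. Relabelling if necessary (the conclusion is symmetric in the three operators) assume $T_1$ is the normal one. Since $T_1$ is a normal contraction on $\C^2$, a unitary change of basis diagonalises it, and being a unitary conjugation this preserves contractivity, invertibility and every anti-commutation relation. Feeding $T_1T_j=-T_jT_1$ into the entries and using invertibility of $T_j$ forces, exactly as in the proofs of Theorem~\ref{lem_nil} and Theorem~\ref{thm_ni}, that the two eigenvalues of $T_1$ are $\pm\lambda$ and that $T_2,T_3$ are anti-diagonal. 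Thus after the conjugation
\[
T_1=\lambda\,J,\qquad J=\begin{pmatrix}1&0\\0&-1\end{pmatrix},\qquad T_j=\begin{pmatrix}0&b_j\\ c_j&0\end{pmatrix}\ (j=2,3),
\]
with $0<|\lambda|\le 1$ and $b_j,c_j\ne 0$. The structural point extracted from normality is that the grading $J$ equals a scalar multiple of $T_1$ and anti-commutes with the odd operators $T_2,T_3$, i.e. $JT_jJ^{-1}=-T_j$ for $j=2,3$.

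Next I would dilate the pair. As $(T_2,T_3)$ is an anti-commuting, hence $q$-commuting with $q=-1$, pair of contractions, Theorem~\ref{thm_KM} gives an anti-commuting pair of unitaries $(U_2,U_3)$ on a space $\mathcal K$ and an isometry $V:\C^2\to\mathcal K$ with $T_2^sT_3^t=V^*U_2^sU_3^tV$; I take this dilation minimal. The crucial step is to carry the grading along: the unitary $J$ conjugates the data $(T_2,T_3)$ to $(-T_2,-T_3)$, so by the functoriality of the minimal dilation (uniqueness up to unitary equivalence) this symmetry lifts to a unitary $\Gamma$ on $\mathcal K$ with $\Gamma V=VJ$ and $\Gamma U_j\Gamma^{-1}=-U_j$ for $j=2,3$. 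Since $J^2=I$, minimality forces $\Gamma^2=I$, so $\Gamma$ is a symmetry (self-adjoint unitary) anti-commuting with $U_2$ and $U_3$.

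With $\Gamma$ in hand I set $N_1=\lambda\Gamma$, $N_2=U_2$, $N_3=U_3$. Then $N_1$ is a normal contraction, since $\|N_1\|=|\lambda|\le 1$ and $N_1N_1^*=|\lambda|^2I=N_1^*N_1$; and because $\Gamma$ anti-commutes with $U_2,U_3$, while anti-commuting unitaries automatically doubly anti-commute (from $U_2U_3=-U_3U_2$ one gets $U_2U_3^*=-U_3^*U_2$), the triple $\underline N=(N_1,N_2,N_3)$ is doubly anti-commuting. A short computation tracking the signs produced by $\Gamma U_j=-U_j\Gamma$ and $JT_j=-T_jJ$, together with $\Gamma V=VJ$ and $T_2^sT_3^t=V^*U_2^sU_3^tV$, yields $V^*N_1^{m}N_2^{s}N_3^{t}V=T_1^mT_2^sT_3^t$ for all $m,s,t\ge 0$, so $\underline N$ dilates $(T_1,T_2,T_3)$. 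Theorem~\ref{thm_dcq}(2) then applies to the doubly anti-commuting normal contractions $\underline N$ and produces an anti-commuting triple of unitaries dilating $\underline N$; composing the two dilations gives an anti-commuting triple of unitaries dilating $(T_1,T_2,T_3)$, as required.

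The main obstacle I expect is precisely the grading step, namely producing the symmetry $\Gamma$ on $\mathcal K$ intertwining $V$ with $J$ and anti-commuting with $U_2,U_3$. This is where normality is indispensable: it lets us realise the grading (and hence $T_1$) by a self-adjoint unitary reached through a norm-preserving unitary change of basis, so that $N_1=\lambda\Gamma$ is automatically a normal contraction with \emph{no} constraint relating $\|T_1\|$ and $\|T_2T_3\|$. By contrast, the naive candidate $N_1=(\lambda/\mu)U_2U_3$ suggested by the identity $T_2T_3=\mu J$ fails to be a contraction as soon as $\|T_1\|>\|T_2T_3\|$, a regime that genuinely occurs; replacing $U_2U_3$ by the symmetry $\Gamma$ is exactly what removes any norm hypothesis. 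Establishing rigorously that the minimal dilation of the pair can be chosen $J$-equivariantly — either by invoking minimality or by writing down an explicit graded dilation — is the part of the argument that will require the most care.
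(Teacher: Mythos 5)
Your reduction to the form $T_1=\lambda J$ with $T_2,T_3$ anti-diagonal is correct, and your endgame (build a doubly anti-commuting triple of normal contractions dilating $\underline{T}$, then invoke Theorem \ref{thm_dcq}(2)) is exactly the paper's. The genuine gap is the step you yourself flag: the existence of the symmetry $\Gamma$ on $\mathcal K$ with $\Gamma V=VJ$ and $\Gamma U_j\Gamma^{-1}=-U_j$ for $j=2,3$. You justify it by ``functoriality of the minimal dilation (uniqueness up to unitary equivalence)'', but no such uniqueness is available: Theorem \ref{thm_KM} asserts nothing about minimality or uniqueness of the dilation it produces, and already in the commutative case $q=1$ minimal And\^o dilations of a pair of commuting contractions are famously \emph{not} unique up to unitary equivalence, so a symmetry of the compressed data $(T_2,T_3)\mapsto(-T_2,-T_3)$ need not lift to any given dilation $(U_2,U_3)$. (For a single contraction your lifting argument works precisely because the minimal unitary dilation is unique; that mechanism is lost for pairs.) As written, the existence of $\Gamma$ is an unproven assertion at least as delicate as the proposition itself, so the proof is incomplete.

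The paper sidesteps the lifting problem by putting the grading \emph{inside} the pair it dilates: since $J=R_{-1}$ is itself a unitary contraction anti-commuting with $T_3$, it applies Theorem \ref{thm_KM} to the anti-commuting pair $(R_{-1},T_3)$ (after arranging, without loss of generality, $|c_2|\le|c_3|$), obtaining anti-commuting unitaries $(U,U_3)$ on $\mathcal K$ with $R_{-1}^mT_3^n=V^*U^mU_3^nV$. The algebraic identity $T_2=c_2c_3^{-1}T_3R_{-1}$, with $|c_2c_3^{-1}|\le 1$, then permits the definition $(N_1,N_2,N_3)=(a_1U,\,c_2c_3^{-1}U_3U,\,U_3)$, which is a doubly anti-commuting triple of normal contractions dilating $(T_1,T_2,T_3)$ with no equivariance or minimality needed; the final appeal to Theorem \ref{thm_dcq} then goes through verbatim. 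To salvage your write-up, replace ``dilate $(T_2,T_3)$ and lift $J$'' by ``dilate $(J,T_3)$ and recover $T_2$ from the scalar relation''. Your side observation that the naive candidate $N_1=(\lambda/\mu)U_2U_3$ fails to be a contraction when $\|T_1\|>\|T_2T_3\|$ is correct, and it is exactly why the grading must be dilated directly rather than manufactured afterwards.
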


\begin{proof}
Let $(T_1, T_2, T_3)$ be an anti-commuting triple of invertible contractions on $\C^2$. Without loss of generality, we assume that $T_1$ is normal. Evidently, if $A$ and $B$ are anti-commuting matrices such that $A$ is invertible, then $tr(B)=-tr(A^{-1}BA)=-tr(B)$ and so, $tr(B)=0$. Thus, $tr(T_j)=0$ for $1 \leq j \leq 3$. One can choose an orthonormal basis of $\C^2$ such that 
	\[
	T_1=\begin{pmatrix}
		a_1 & 0 \\
		0 & -a_1\\
	\end{pmatrix} \quad \mbox{and} \quad 
	T_j=\begin{pmatrix}
		a_j & d_j\\
		c_j & -a_j\\
	\end{pmatrix} \quad (j=2,3).
	\]
Since $a_1 \ne 0$, we have that 
\[
0=T_1T_j+T_jT_1=\begin{pmatrix}
2a_1a_j & 0\\
0 & 2a_1a_j\\
\end{pmatrix} \quad \text{and so,} \quad a_j=0 \ \text{for $j=2,3$}.
\]	
Then, it follows that 
\[
T_j=\begin{pmatrix}
0 & d_j \\
c_j & 0 \\
\end{pmatrix} \ (j=2,3) \quad \text{and so,} \quad T_2T_3+T_3T_2=\begin{pmatrix}
d_2c_3+c_2d_3 & 0 \\
0 & d_2c_3+c_2d_3\\
\end{pmatrix}.
\]
By invertibililty of $T_2, T_3$, we have that $c_2, c_3, d_2, d_3$ are all non-zero and so, $d_3=-c_2^{-1}c_3d_2$. Then 
\begin{equation}\label{eqn_322}
T_1=a_1R_{-1}, \quad T_2=c_2c_3^{-1}T_3R_{-1} \quad \text{and} \quad T_3=c_3c_2^{-1}T_2R_{-1}, \quad \text{where} \quad R_{-1}=\begin{pmatrix}
1 & 0 \\
0 & -1
\end{pmatrix}.
\end{equation}
Either $|c_3| \leq |c_2|$ or $|c_2| \leq |c_3|$. Without loss of generality, we assume that $|c_2| \leq |c_3|$. Applying Theorem \ref{thm_KM} on the anti-commuting pair $(R_{-1}, T_3)$, we have that there is an anti-commuting pair of unitaries $(U, U_3)$ on a Hilbert space $\mathcal{K}$ and an isometry $V: \C^2 \to  \mathcal{K}$ such that 
\begin{equation}\label{eqn_323}
R_{-1}^mT_3^n=V^* U^mU_3^nV
\end{equation}
for all integers $m, n \geq 0$. Define
$(N_1, N_2, N_3)=(a_1U, c_2c_3^{-1}U_3U, U_3)$ on $\mathcal{K}$. It is evident that $\underline{N}$ is an anti-commuting triple of normal contractions on $\mathcal{K}$. Since $(U, U_3)$ is an anti-commuting pair of unitaries, we have that $N_iN_j^*=-N_j^*N_i$ for $1 \leq i < j \leq 3$. For non-negative integers $m, n, r$, we have by  (\ref{eqn_322}) \& (\ref{eqn_323}) that 
\begin{equation*}
\begin{split}
V^*\left(N_1^{m}N_2^{n}N_3^{r}\right)V
=a_1^m(c_2c_3^{-1})^nV^*\left(U^m(U_3U)^{n}U_3^r\right)V
=a_1^m(c_2c_3^{-1})^nR_{-1}^m(T_3R_{-1})^{n}T_3^r
=T_1^mT_2^nT_3^r
\end{split}
\end{equation*}
and hence, $\underline{N}$ dilates $\underline{T}$. The desired conclusion now follows from Theorem \ref{thm_dcq}.
\end{proof}

We now prove the following dilation result for an anti-commuting $k$-tuple of invertible contractions on $\C^2$. As mentioned earlier, it suffices to consider the case when $k=3$.

\begin{thm}\label{thm_antiII}
Let $\underline{T}=(T_1, T_2, T_3)$ be an anti-commuting triple of invertible contractions on $\C^2$. Then there exist an anti-commuting triple of unitaries $\underline{U}=(U_1, U_2, U_3)$ on a space $\mathcal{K} \supseteq \C^2$ and a non-zero scalar $\beta$ such that $\underline{N}=(\beta U_1, U_2, U_3)$ dilates $\underline{T}$. Moreover, if $\|T_1\| \leq \|T_2T_3\|$, then $(T_1,T_2,T_3)$ dilates to an anti-commuting triple of unitaries.
\end{thm}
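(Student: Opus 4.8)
The plan is to reduce the triple to the anti-commuting pair $(T_2,T_3)$, for which Theorem \ref{thm_KM} already supplies an anti-commuting unitary dilation, and then to recover $T_1$ as an algebraically forced scalar multiple of $T_2T_3$. First I would record the algebraic constraints. Since each $T_j$ is invertible and anti-commutes with the others, conjugation gives $\mathrm{tr}(T_j)=0$, so by Cayley--Hamilton $T_j^2=\mu_j I$ with $\mu_j\neq 0$. The crucial observation is that $T_2T_3$ commutes with $T_1$: indeed $T_1(T_2T_3)=(T_1T_2)T_3=(T_2T_3)T_1$ after two sign changes. As $T_1$ has distinct eigenvalues $\pm\sqrt{\mu_1}$, its commutant is $\mathrm{span}\{I,T_1\}$, so $T_2T_3=\alpha I+\delta T_1$; computing $(T_2T_3)^2=-\mu_2\mu_3 I$ forces $\alpha=0$, and invertibility forces $\delta\neq0$. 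Hence
\[
T_2T_3=\delta T_1,\qquad \delta\neq 0,
\]
so the whole triple is determined by the pair $(T_2,T_3)$ together with the scalar $\delta$.

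For the first assertion I would apply Theorem \ref{thm_KM} to $(T_2,T_3)$ to obtain anti-commuting unitaries $(U_2,U_3)$ on some $\mathcal K\supseteq\C^2$ and an isometry $V$ with $T_2^nT_3^r=V^*U_2^nU_3^rV$. Setting $U_1:=U_2U_3$ (a unitary) and $\beta:=\delta^{-1}$, a short check shows $(U_1,U_2,U_3)$ is again an anti-commuting triple of unitaries and that $\beta U_1=\delta^{-1}U_2U_3$ mimics $T_1=\delta^{-1}T_2T_3$. The remaining, and main computational, step is to verify the dilation identity $T_1^mT_2^nT_3^r=V^*(\beta U_1)^mU_2^nU_3^rV$. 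For this I would reduce both the operator word $(U_2U_3)^mU_2^nU_3^r$ and the matrix word $(T_2T_3)^mT_2^nT_3^r$ to standard monomials: using $AB=-BA$ together with the fact that $A^2,B^2$ are central in each case (scalar for the $T_j$, commuting with the generators for the $U_j$), both reduce with the \emph{same} sign to $(-1)^{\binom m2+mn}(\cdot)^{m+n}(\cdot)^{m+r}$. Since Theorem \ref{thm_KM} controls exactly the monomials $V^*U_2^{m+n}U_3^{m+r}V=T_2^{m+n}T_3^{m+r}$ (both exponents being nonnegative), the two reductions agree after compression, and multiplying by $\beta^m$ yields $T_1^mT_2^nT_3^r$. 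The delicate point here is keeping the sign bookkeeping identical on the operator and matrix sides, and making sure only nonnegative-exponent monomials are ever fed to the pair dilation of Theorem \ref{thm_KM}.

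For the \emph{Moreover} part, note that $\|T_2T_3\|=|\delta|\,\|T_1\|$, so the hypothesis $\|T_1\|\le\|T_2T_3\|$ is exactly $|\delta|\ge1$, i.e. $|\beta|\le1$. Thus $\beta U_1$ is a contraction and $(\beta U_1,U_2,U_3)$ is a triple of normal contractions on $\mathcal K$. Since anti-commuting unitaries are automatically doubly anti-commuting (from $U_iU_j=-U_jU_i$ one gets $U_iU_j^*=-U_j^*U_i$ by multiplying on both sides by $U_j^*$), the triple $(\beta U_1,U_2,U_3)$ is a doubly $q$-commuting family of contractions with $q\equiv -1$. Here it is essential to invoke a dilation theorem valid on an arbitrary Hilbert space, since $\mathcal K$ need not be finite dimensional; Theorem \ref{thm_dcq}(2) applies and furnishes an anti-commuting unitary dilation $(W_1,W_2,W_3)$ of $(\beta U_1,U_2,U_3)$. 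Composing the two isometries then shows $(W_1,W_2,W_3)$ dilates $(T_1,T_2,T_3)$, which completes the argument.

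I expect the genuinely substantive work to be the verification of the dilation identity in the first part, where the combinatorics of the signs produced by anti-commutation must be matched precisely between the unitary word and the matrix word; everything else is either forced algebra (the relation $T_2T_3=\delta T_1$ and the translation $\|T_1\|\le\|T_2T_3\|\Leftrightarrow|\beta|\le1$) or a direct appeal to Theorems \ref{thm_KM} and \ref{thm_dcq}.
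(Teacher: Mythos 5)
Your proposal is correct, and its dilation scaffolding coincides with the paper's: apply Theorem \ref{thm_KM} to the pair $(T_2,T_3)$, set $U_1=U_2U_3$ and $N_1=\beta U_1$, verify the dilation identity, and finish the ``Moreover'' part by dilating the resulting doubly anti-commuting triple of normal contractions. Where you genuinely diverge is in how the pivotal relation $T_2T_3=\delta T_1$ is obtained. The paper chooses a Schur basis making $T_1$ upper triangular and splits into two cases: $d_1=0$ (the normal case, dispatched separately through Proposition \ref{prop_anti}) and $d_1\ne 0$, where an explicit entrywise computation culminating in (\ref{eqn_325}) produces $2T_2T_3=\alpha T_1$ with $\alpha$ written in terms of the matrix entries. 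Your commutant argument --- trace zero plus Cayley--Hamilton give $T_j^2=\mu_j I$ with $\mu_j\neq 0$, $T_2T_3$ commutes with $T_1$, and the commutant of a $2\times 2$ matrix with distinct eigenvalues is $\mathrm{span}\{I,T_1\}$ --- is coordinate-free, needs no case split, and subsumes the normal case, so Proposition \ref{prop_anti} is not needed as an input to the first assertion; what the paper's computation buys in exchange is the explicit formula for $\beta$. One step of your forced algebra is compressed to the point of a small (easily patched) gap: the identity $(T_2T_3)^2=-\mu_2\mu_3 I$ yields $\alpha\delta=0$ but does not by itself force $\alpha=0$, since the branch $\delta=0$, $\alpha^2=-\mu_2\mu_3\neq 0$ survives the computation; the cleanest fix is to note $\mathrm{tr}(T_2T_3)=\mathrm{tr}(T_3T_2)=-\mathrm{tr}(T_2T_3)$, so $\mathrm{tr}(T_2T_3)=2\alpha=0$, after which invertibility gives $\delta\neq 0$ (alternatively, if $\delta=0$ then $T_3T_2=T_2^{-1}(T_2T_3)T_2=\alpha I=-\alpha I$ forces $T_2T_3=0$). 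Your explicit sign bookkeeping, reducing both words to $(-1)^{\binom{m}{2}+mn}(\cdot)^{m+n}(\cdot)^{m+r}$, is exactly what the paper leaves implicit in the step $V^*\bigl((U_2U_3)^mU_2^nU_3^r\bigr)V=(T_2T_3)^mT_2^nT_3^r$, and it is valid because the reduction uses only the common relation $XY=-YX$ (centrality of the squares is not even required). Finally, in the ``Moreover'' part your direct appeal to Theorem \ref{thm_dcq}(2) is actually the more accurate citation: the paper invokes Proposition \ref{prop_anti}, which is stated for triples on $\C^2$, whereas $(\beta U_1,U_2,U_3)$ lives on the possibly infinite-dimensional $\mathcal{K}$; this is harmless in the paper only because the proof of Proposition \ref{prop_anti} itself terminates in Theorem \ref{thm_dcq}, and your route avoids the mismatch altogether.
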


\begin{proof}
Let $(T_1, T_2, T_3)$ be an anti-commuting triple of invertible contractions on $\C^2$. As mentioned in Proposition \ref{prop_anti}, $tr(T_j)=0$ for $j=1, 2, 3$. One can choose an orthonormal basis of $\C^2$ such that 
\[
T_1=\begin{pmatrix}
		a_1 & d_1 \\
		0 & -a_1\\
	\end{pmatrix} \quad \mbox{and} \quad 
	T_j=\begin{pmatrix}
		a_j & d_j\\
		c_j & -a_j\\
	\end{pmatrix} \quad (j=2,3).
\]
Note that $a_1 \ne 0$. If $d_1=0$, then the desired conclusion follows from Proposition \ref{prop_anti}. Let $d_1 \ne 0$. Some routine calculations show that
\[
T_1T_j+T_jT_1=\begin{pmatrix}
2a_1a_j+d_1c_j & 0 \\
0 & 2a_1a_j+d_1c_j
\end{pmatrix} \quad \text{and so,} \quad 2a_1a_j+d_1c_j=0 \quad (j=2,3).
\]
Hence, $c_j=0$ if and only if $a_j=0$ for $j=2,3$. It follows from the invertibility of $T_2$ and $T_3$ that $a_2, a_3, c_2, c_3$ are all non-zero. Therefore, we have that 
\begin{equation}\label{eqn_324}
c_2=\frac{-2a_1a_2}{d_1}, \quad c_3=\frac{-2a_1a_3}{d_1} \quad \text{and so,} \quad \frac{c_2}{c_3}=\frac{a_2}{a_3}=\lambda
\end{equation}
for some $\lambda \in \C \setminus \{0\}$. Thus, $T_2=\begin{pmatrix}
\lambda a_3 & d_2 \\
\lambda c_3 & -\lambda a_3 
\end{pmatrix}$ and it follows that
\[
 T_2T_3= \begin{pmatrix}
\lambda a_3^2 +d_2c_3 & \lambda a_3d_3-d_2a_3\\
0 & \lambda c_3d_3+\lambda a_3^2 \\
\end{pmatrix}
\ \
 \text{and} 
 \ \ T_3T_2=\begin{pmatrix}
\lambda a_3^2+ \lambda c_3d_3 & a_3d_2-\lambda a_3d_3 \\
0 & d_2 c_3 + \lambda a_3^2
\end{pmatrix}.
\]
Again, a few steps of calculations give
\begin{align} \label{eqn_325}
2T_2T_3=T_2T_3-T_3T_2
&= \begin{pmatrix}
c_3(d_2-\lambda d_3) & 2a_3(\lambda d_3-d_2) \\
0 & -c_3(d_2-\lambda d_3) \\
\end{pmatrix} \notag \\
&=c_3(d_2-\lambda d_3)\begin{pmatrix}
1 & -2a_3\slash c_3 \\
0 & -1
\end{pmatrix} \notag \\
&=a_1^{-1}c_3(d_2-\lambda d_3)\begin{pmatrix}
a_1 & -2a_1a_3\slash c_3 \\
0 & -a_1
\end{pmatrix} \notag \\
&=a_1^{-1}c_3(d_2-\lambda d_3)\begin{pmatrix}
a_1 & d_1 \\
0 & -a_1
\end{pmatrix} \qquad [\text{by} \ (\ref{eqn_324})] \notag \\
&=\alpha T_1, 
\end{align}
where $\alpha=a_1^{-1}c_3(d_2-\lambda d_3)$. Applying Theorem \ref{thm_KM} on the anti-commuting pair $(T_2, T_3)$, we have that there is an anti-commuting pair of unitaries $(U_2, U_3)$ on a Hilbert space $\mathcal{K}$ and an isometry $V: \C^2 \to \mathcal{K}$ such that 
\begin{equation}\label{eqn_326}
T_2^mT_3^n=V^* U_2^mU_3^nV
\end{equation}
for all integers $m, n \geq 0$. Set 
$
\underline{N}=(N_1, N_2, N_3)=(2\alpha^{-1}U_2U_3, U_2, U_3)=(\beta U_1, U_2, U_3)$, where $\beta =2 \alpha^{-1}$ and $U_1=U_2U_3$. Evidently, $\underline{N}$ is an anti-commuting triple of normal operators on $\mathcal{K}$. Since $U_2$ and $U_3$ are unitaries, we have that $N_iN_j^*=-N_j^*N_i$ for $1 \leq i < j \leq 3$. For integers $m, n, r \geq 0$, we have by  (\ref{eqn_325}) \& (\ref{eqn_326}) that 
\begin{equation*}
\begin{split}
V^*\left(N_1^{m}N_2^{n}N_3^{r}\right)V
=\beta^mV^*\left((U_2U_3)^{m}U_2^nU_3^r\right)V
=\beta^m(T_2T_3)^{m}T_2^nT_3^r
=T_1^mT_2^nT_3^r
\end{split}
\end{equation*}
and hence $\underline{N}$ dilates $\underline{T}$. In case, when $\|T_1\| \leq \|T_2T_3\|$, then it is clear from (\ref{eqn_325}) that $|\beta|\leq 1$ and so, $\underline{N}$ consists of invertible anti-commuting normal contractions. By Proposition \ref{prop_anti}, $\underline{N}$ dilates to an anti-commuting triple of unitaries and hence $(T_1,T_2,T_3)$ admits the desired dilation.
\end{proof}

Below we present an example of anti-commuting triple of invertible contractions $(T_1, T_2, T_3)$ on $\C^2$ satisfying $\|T_1\| \leq \|T_2T_3\|$. This is to show the existence of such an anti-commuting triple of $2 \times 2$ contractions.

\begin{eg}
For $\epsilon=0.1$, let 
$T_1=\epsilon^2\begin{pmatrix}
1 & 1 \\
0 & -1
\end{pmatrix}, T_2=\epsilon\begin{pmatrix}
1 & 0 \\
-2 & -1
\end{pmatrix}$ and $T_3=\epsilon\begin{pmatrix}
-1 & -1 \\
2 & 1
\end{pmatrix}$. It is not difficult to see that $(T_1, T_2, T_3)$ is an anti-commuting triple of invertible contractions on $\C^2$. Moreover, each $T_j$ is non-normal and  $T_2T_3=-T_1$ holds which further implies that $\|T_1\| \leq \|T_2T_3\|$.\qed
\end{eg}

Combining Theorems \ref{thm_ni} \& \ref{thm_antiII}, we arrive at the following theorem which is the main result of this Section.

\begin{thm}
Let $\underline{T}=(T_1, \dotsc, T_k)$ be an anti-commuting $k$-tuple of contractions on $\C^2$. Then there exist a $q$-commuting tuple $\underline{U}=(U_1, \dotsc, U_k)$ of unitaries on a space $\mathcal{K} \supseteq \C^2$ with $q \subseteq \{\pm 1\}$ and a non-zero scalar $\beta$ such that $\underline{N}=(\beta U_1, U_2, \dotsc, U_k)$ dilates $\underline{T}$. Moreover, the following two special cases arise.

\begin{enumerate}
\item If there exists a non-zero non-invertible contraction among $T_1, \dotsc, T_k$, then $(T_1, \dots, T_k)$ dilates to a $q$-commuting tuple of unitaries with $q \subseteq \{\pm 1 \}$.

\item If $T_1, \dotsc, T_k$ are all invertible, then $k\leq 3$ and thus $(T_1, \dots , T_k)$ reduces to at most to a triple say $(S_1,S_2,S_3)$. If $\|S_1\| \leq \|S_2S_3\|$, then $(S_1,S_2,S_3)$ dilates to an anti-commuting triple of unitaries.
\end{enumerate}
\end{thm}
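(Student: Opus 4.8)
The plan is to obtain this combined statement purely by assembling the two dilation theorems already proved, Theorems \ref{thm_ni} and \ref{thm_antiII}, after a preliminary cleanup and a single dichotomy on invertibility. First I would apply Remark \ref{rem_1} to strip off any zero entries, so that without loss of generality $T_1, \dotsc, T_k$ may be taken to be non-zero anti-commuting contractions on $\C^2$; a $q$-unitary dilation of the reduced tuple extends to one of the original tuple by padding with zero unitaries exactly as in that remark. With the tuple consisting of non-zero operators, I would split into two exhaustive cases according to whether or not some $T_j$ fails to be invertible.

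In the first case, suppose at least one of $T_1, \dotsc, T_k$ is a non-zero non-invertible contraction. Then Theorem \ref{thm_ni} applies verbatim and furnishes a $q$-commuting $k$-tuple of unitaries $\underline{U}=(U_1, \dotsc, U_k)$, with $q \subseteq \{\pm 1\}$, that dilates $\underline{T}$. Taking $\beta = 1$ and $\underline{N} = \underline{U}$ simultaneously yields the main assertion in this branch and establishes the special case (1).

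In the remaining case $T_1, \dotsc, T_k$ are all invertible. Proposition \ref{prop:Sec-2-04} (a consequence of Theorem \ref{thm_Hrubes}) forces $k \leq 3$, so after relabelling the tuple is a triple $(S_1, S_2, S_3)$, possibly with fewer than three genuine entries. For $k=1$ the single invertible contraction dilates to a unitary by Sz.-Nagy's theorem, and for $k=2$ the anti-commuting pair dilates to an anti-commuting pair of unitaries by Theorem \ref{thm_KM}; in both subcases one takes $\beta = 1$. For $k=3$ I would invoke Theorem \ref{thm_antiII}, which produces an anti-commuting triple of unitaries $(U_1, U_2, U_3)$ and a non-zero scalar $\beta$ such that $(\beta U_1, U_2, U_3)$ dilates $(S_1, S_2, S_3)$; this is exactly the main assertion with $\underline{N}=(\beta U_1, U_2, U_3)$. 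Finally, under the extra hypothesis $\|S_1\| \leq \|S_2 S_3\|$, the ``moreover'' clause of Theorem \ref{thm_antiII} upgrades this to a genuine anti-commuting unitary dilation, which is the special case (2).

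Since every branch is resolved by quoting an already established result, there is no serious analytic obstacle here; the only points requiring care are organizational. The main subtlety is reconciling the two branches into the single uniform conclusion $\underline{N} = (\beta U_1, U_2, \dotsc, U_k)$: the non-invertible branch and the low-dimensional subcases give a \emph{bona fide} $q$-commuting unitary dilation (so that $\beta = 1$ works), whereas only the all-invertible triple genuinely needs the scalar factor $\beta$ arising from the relation $2T_2T_3 = \alpha T_1$ used in the proof of Theorem \ref{thm_antiII}. One must also check that padding back the discarded zero operators in the first reduction preserves the structure $q \subseteq \{\pm 1\}$, which is immediate from Remark \ref{rem_1}.
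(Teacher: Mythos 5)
Your proposal is correct and matches the paper exactly: the paper offers no separate proof of this theorem, stating it as the direct combination of Theorem \ref{thm_ni} (after the zero-entry reduction of Remark \ref{rem_1}) and Theorem \ref{thm_antiII} via the bound of Proposition \ref{prop:Sec-2-04}, which is precisely the case split you carry out. Your added care about taking $\beta=1$ in the branches with genuine unitary dilations and about re-inserting zero operators is a faithful elaboration of what the paper leaves implicit.
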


\smallskip

\section{Classification of $q$-commuting tuples of $2 \times 2$ scalar matrix-contractions}\label{sec_02}
	
	\vspace {0.2cm}
	
	\noindent In this Section, we present a model for a $q$-commuting tuple of contractions acting on $\C^2$. We shall use the following notations for operators $T_1, \dotsc, T_k$ acting on a finite-dimensional space:
	\[
	\Lambda_1=\{ i: \ T_i \ \mbox{is diagonalizable}, \  1\leq i \leq k \}, \quad 
	\Lambda_2=\{i: \ T_i \ \mbox{is non-diagonalizable}, \ 1\leq i \leq k\}.
	\] 
We first discuss a few cases when a $q$-commuting tuple of contractions becomes a commuting tuple.
				
\begin{prop}\label{non-diag.}
		If $\underline{T}=(T_1, \dotsc, T_k)$ is a $q$-commuting tuple of non-diagonalizable contractions acting on $\mathbb{C}^2$, then $\underline{T}$ is a commuting tuple.
	\end{prop}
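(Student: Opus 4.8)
The plan is to reduce to a pairwise statement and then exploit the rigidity of $2\times 2$ non-diagonalizable matrices. Since being a commuting tuple only means $T_iT_j=T_jT_i$ for every pair, it suffices to fix an arbitrary pair $(T_i,T_j)$ satisfying $T_iT_j=q_{ij}T_jT_i$ and show they commute. The key structural fact I would use is that over $\C$ a non-diagonalizable $2\times 2$ matrix has a repeated eigenvalue $\lambda_i$ with $T_i-\lambda_i I$ a nonzero nilpotent; in particular $\operatorname{tr}(T_i)=2\lambda_i$ and $\det(T_i)=\lambda_i^2$. If $q_{ij}=1$ there is nothing to prove, so I would assume $q_{ij}\neq 1$ and argue that the relation collapses to genuine commutation.

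First I would take the trace and the determinant of the defining relation. The trace gives $(1-q_{ij})\operatorname{tr}(T_iT_j)=0$, hence $\operatorname{tr}(T_iT_j)=0$; the determinant, using $\det(T_iT_j)=\det(T_jT_i)$, gives $(1-q_{ij}^2)\lambda_i^2\lambda_j^2=0$. If both $\lambda_i,\lambda_j$ were nonzero this would force $q_{ij}^2=1$, i.e. $q_{ij}=-1$, so $T_i$ and $T_j$ anti-commute; but then $T_i$ is invertible, and the standard fact $\operatorname{tr}(T_j)=-\operatorname{tr}(T_i^{-1}T_jT_i)=-\operatorname{tr}(T_j)$ forces $\operatorname{tr}(T_j)=2\lambda_j=0$, contradicting $\lambda_j\neq 0$. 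Thus at least one of $T_i,T_j$ must be nilpotent.

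Assuming then, without loss of generality, that $T_i$ is nilpotent, I would choose an orthonormal basis of $\C^2$ in which $T_i=\left(\begin{smallmatrix} 0 & d_i \\ 0 & 0\end{smallmatrix}\right)$ with $d_i\neq 0$. Writing $T_j=\left(\begin{smallmatrix} a & b \\ c & e\end{smallmatrix}\right)$ and comparing entries in $T_iT_j=q_{ij}T_jT_i$, the $(1,1)$-entry forces $c=0$ and the $(1,2)$-entry forces $e=q_{ij}a$. Hence $T_j$ is upper triangular with diagonal $(a,q_{ij}a)$, so its eigenvalues are $a$ and $q_{ij}a$. Non-diagonalizability of $T_j$ requires a repeated eigenvalue, i.e. $a(1-q_{ij})=0$, whence $a=0$ since $q_{ij}\neq 1$. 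Therefore $T_j$ is strictly upper triangular nilpotent, and a direct multiplication gives $T_iT_j=0=T_jT_i$, so the pair commutes.

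The only genuinely delicate point will be this last step: knowing that one matrix is nilpotent is not enough on its own, and one must invoke the non-diagonalizability of the \emph{other} matrix to kill its diagonal part. This is precisely where the blanket hypothesis that every $T_i$ is non-diagonalizable does the real work, forcing the $q$-relation with $q_{ij}\neq 1$ to degenerate into the trivial commutation $T_iT_j=T_jT_i=0$. Since the pair $(T_i,T_j)$ was arbitrary, all pairs commute and $\underline{T}$ is a commuting tuple.
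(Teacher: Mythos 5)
Your proof is correct, but it reaches the conclusion by a different reduction than the paper. The paper's proof fixes an arbitrary $i$, Schur-triangularizes $T_i$ with respect to an orthonormal basis as $\bigl(\begin{smallmatrix} \alpha_i & b_i \\ 0 & \alpha_i \end{smallmatrix}\bigr)$ with $b_i\neq 0$ (non-diagonalizability forces the repeated eigenvalue on the diagonal), and compares entries in $T_iT_j=q_{ij}T_jT_i$ directly: when $q_{ij}\neq 1$, the entry equations force $e_j=0$ in both cases $\alpha_i\neq 0$ and $\alpha_i=0$, non-diagonalizability of $T_j$ then forces equal diagonal entries $c_j=f_j$, and any two upper triangular matrices with constant diagonal commute --- no nilpotency reduction is needed. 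You instead first take the trace and determinant of the $q$-relation to exclude the case where both repeated eigenvalues are nonzero (via $q_{ij}^2=1$ and the anti-commuting trace argument $\operatorname{tr}(T_j)=-\operatorname{tr}(T_i^{-1}T_jT_i)$, the same trick the paper invokes later in Proposition \ref{prop_anti} and Theorem \ref{thm_ni}), reduce without loss of generality to $T_i$ nilpotent, and then run essentially the paper's entrywise endgame in the special case $\alpha_i=0$. Both routes are sound; the paper's is shorter and uniform in $\alpha_i$, while yours makes explicit the structural fact that for $q_{ij}\neq 1$ the pair actually satisfies $T_iT_j=T_jT_i=0$, which is only implicit in the paper's version (there the same equations yield $\alpha_ic_j=0$ and $\alpha_id_j+b_ic_j=0$, so the common product vanishes as well). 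One cosmetic remark: the identity $\operatorname{tr}(T_iT_j)=0$ you derive in your second paragraph is never used afterwards and can be dropped.
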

	
\begin{proof}
Fix $i \in \{1, \dotsc, k\}$. Choose an orthonormal basis $\beta$ of $\C^2$ with respect to which $T_i$ is upper triangular. So, with respect to the orthonormal basis $\beta$, we have
		\[
		T_i=\begin{pmatrix}
			\alpha_i & b_i\\
			0 & \alpha_i\\
		\end{pmatrix} \quad \text{and} \quad T_j=\begin{pmatrix}
			c_j & d_j\\
			e_j & f_j
		\end{pmatrix}  \quad ( j \ne i)
		\]
for some scalars $\alpha_i, b_i, c_j, d_j, e_j, f_j$. Evidently $b_i \ne 0$, because $T_i$ is non-diagonalizable. By the relation $T_iT_j=q_{ij}T_jT_i$, we have  
\begin{equation}\label{14.1}
			\alpha_ic_j+b_ie_j=q_{ij}\alpha_ic_j, \quad 
			\alpha_id_j+b_if_j=q_{ij}(c_jb_i+d_j\alpha_i),
		\end{equation}
	\begin{equation}\label{14.2}
			\alpha_ie_j=q_{ij}\alpha_ie_j, \quad 
			\alpha_if_j=q_{ij}(e_jb_i+f_j\alpha_i).
		\end{equation}
If $q_{ij}=1$, then $T_iT_j=T_jT_i$ and we are done. Suppose $q_{ij} \ne 1$. By  (\ref{14.2}), $\alpha_ie_j=0$ and so, if $\alpha_i \ne 0$, then $e_j=0$. Otherwise, let $\alpha_i = 0$. Again by (\ref{14.2}), $q_{ij}e_jb_i=0$ and so, $e_j=0$ since $q_{ij}$ and $b_i$ both are non-zero. In either case, $e_j=0$. Since $T_j$ is non-diagonalizable, we must have that $c_j=f_j$ and $d_j \ne 0$. Combining all these we have  
		\[
		T_i=\begin{pmatrix}
			\alpha_i & b_i\\
			0 & \alpha_i\\
		\end{pmatrix}, \quad T_j=\begin{pmatrix}
			c_j & d_j\\
			0 & c_j
		\end{pmatrix} \quad \mbox{and hence,} \quad
		T_iT_j=T_jT_i=\begin{pmatrix}
			\alpha_ic_j & b_ic_j+\alpha_id_j\\
			0 & \alpha_ic_j
		\end{pmatrix}.
		\]
The proof is now complete.
	\end{proof}

\begin{prop}\label{diag._scalar}
		If $\underline{T}=(T_1, \dotsc, T_k)$ is a $q$-commuting tuple of contractions on $\mathbb{C}^2$ such that every diagonalizable contraction in $\underline{T}$ is a scalar map $($i.e. a scalar times the identity operator$)$, then $\underline{T}$ is a commuting tuple.
	\end{prop}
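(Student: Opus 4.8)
The plan is to exploit the partition of the index set into $\Lambda_1$ and $\Lambda_2$, handling separately the pairs that involve a diagonalizable (hence, by hypothesis, scalar) operator and the pairs consisting of two non-diagonalizable operators. The two structural facts I would lean on are that a scalar map is central in $\mathcal{B}(\C^2)$ and that the non-diagonalizable part of the tuple already falls under Proposition \ref{non-diag.}.

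First I would dispose of every pair $(i,j)$ in which at least one index, say $i$, lies in $\Lambda_1$. By hypothesis $T_i$ is a scalar map, $T_i = \lambda_i I$ for some $\lambda_i \in \C$, and therefore $T_iT_j = \lambda_i T_j = T_jT_i$ for every $j$; that is, $T_i$ commutes with each member of the tuple irrespective of the value of $q_{ij}$. In particular, any two scalar maps commute as well. This settles all pairs except those for which both indices lie in $\Lambda_2$.

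Next I would observe that the subfamily $(T_\ell)_{\ell \in \Lambda_2}$ is itself a $q$-commuting tuple of non-diagonalizable contractions on $\C^2$: the defining relations $T_iT_j = q_{ij}T_jT_i$ (together with $q_{ij}=q_{ji}^{-1}$) restrict to any subset of indices, and each $T_\ell$ with $\ell \in \Lambda_2$ is by definition a non-diagonalizable contraction. Hence Proposition \ref{non-diag.} applies verbatim to this subfamily and yields $T_iT_j = T_jT_i$ for all $i,j \in \Lambda_2$.

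Combining the two steps, every pair $T_i, T_j$ commutes, and so $\underline{T}$ is a commuting tuple. I do not expect a genuine obstacle here: the only point requiring a moment's care is recognizing that the scalar maps are central, which lets them commute with the non-diagonalizable operators regardless of the scalars $q_{ij}$ and thereby reduces the entire problem to the non-diagonalizable subfamily already covered by Proposition \ref{non-diag.}.
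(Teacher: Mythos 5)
Your proposal is correct and follows essentially the same route as the paper: split each pair according to whether an index lies in $\Lambda_1$ (where the hypothesis makes $T_i$ a scalar map, hence central, so it commutes regardless of $q_{ij}$) or both lie in $\Lambda_2$ (where Proposition \ref{non-diag.} applies to the non-diagonalizable subfamily). Your write-up merely makes explicit the minor observation, implicit in the paper, that the $q$-commuting relations restrict to the $\Lambda_2$ subfamily.
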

	
\begin{proof}
Take any pair $(T_i, T_j)$ for $1 \leq i<j \leq k$. If $T_i$ or $T_j$ is in $\Lambda_1$, then one of them is a scalar map and so, $T_iT_j=T_jT_i$. If $T_i, T_j \in \Lambda_2$, then $T_i$ and $T_j$ commute by virtue of Proposition \ref{non-diag.}.
	\end{proof}

\begin{prop}\label{prop_204}
		Let $\underline{T}=(T_1, \dotsc, T_k)$ be a $q$-commuting tuple of contractions on $\mathbb{C}^2$ and let the spectrum $\sigma(T_i)=\{a_i, b_i\}$ for $1 \leq i \leq k$. If there exists $i \in \{1, \dotsc, k\}$ such that $T_i$ is diagonalizable with $|a_i|\ne |b_i|$, then $\underline{T}$ is a commuting tuple. 
	\end{prop}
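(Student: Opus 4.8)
The plan is to reduce to the situation where $T_i$ is literally diagonal and then show that the defining relations force every other member of the tuple to be diagonal as well. Since simultaneous similarity $T_\ell \mapsto P^{-1}T_\ell P$ preserves both the $q$-commuting relations (if $T_iT_j=q_{ij}T_jT_i$ then $(P^{-1}T_iP)(P^{-1}T_jP)=q_{ij}(P^{-1}T_jP)(P^{-1}T_iP)$) and commutativity, it suffices to prove the statement after conjugating by the invertible matrix that diagonalizes $T_i$. So I would first write
\[
T_i=\begin{pmatrix} a_i & 0\\ 0 & b_i\end{pmatrix},\qquad T_j=\begin{pmatrix} c_j & d_j\\ e_j & f_j\end{pmatrix}\quad(j\ne i),
\]
noting that the hypothesis $|a_i|\ne|b_i|$ in particular gives $a_i\ne b_i$, so this is a genuine diagonalization with distinct eigenvalues. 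The contractivity assumption plays no role here; only the algebraic relations and the moduli of the eigenvalues matter.

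Next I would compare the two sides of $T_iT_j=q_{ij}T_jT_i$ entrywise. The $(1,2)$ and $(2,1)$ entries produce
\[
(a_i-q_{ij}b_i)\,d_j=0,\qquad (b_i-q_{ij}a_i)\,e_j=0.
\]
The crucial observation is that $|q_{ij}|=1$, so $|q_{ij}b_i|=|b_i|\ne|a_i|$ and $|q_{ij}a_i|=|a_i|\ne|b_i|$; hence $a_i\ne q_{ij}b_i$ and $b_i\ne q_{ij}a_i$, which forces $d_j=e_j=0$. Thus each $T_j$ is diagonal in the chosen basis. Since diagonal matrices commute, the whole conjugated tuple commutes pairwise, and undoing the similarity shows that $\underline{T}$ is a commuting tuple.

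The heart of the argument — and the only step requiring real care — is the combination of $|a_i|\ne|b_i|$ with $|q_{ij}|=1$ to eliminate the possibility $a_i=q_{ij}b_i$. I expect this to be the main (indeed the only) obstacle, and it is exactly where the modulus hypothesis is used: the reasoning is uniform in $j$ and needs no separate treatment of the case $q_{ij}=1$, since $|q_{ij}|=1$ rules out $a_i=q_{ij}b_i$ in every case. Without the stronger condition $|a_i|\ne|b_i|$ — retaining only $a_i\ne b_i$ — one could have $a_i=q_{ij}b_i$ for some unimodular $q_{ij}\ne 1$, which would permit a genuinely non-diagonal $T_j$, and the conclusion would break down; this is what makes the modulus hypothesis essential rather than cosmetic.
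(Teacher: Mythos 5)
Your proposal is correct and follows essentially the same route as the paper: diagonalize $T_i$ in a (not necessarily orthonormal) basis, compare the $(1,2)$ and $(2,1)$ entries of $T_iT_j=q_{ij}T_jT_i$ to get $(a_i-q_{ij}b_i)d_j=0$ and $(b_i-q_{ij}a_i)e_j=0$, and use $|q_{ij}|=1$ together with $|a_i|\ne|b_i|$ to rule out $a_i=q_{ij}b_i$, forcing every $T_j$ to be diagonal and hence the tuple to commute. Your closing remark on why the modulus hypothesis (rather than mere distinctness of eigenvalues) is essential is a nice observation consistent with the paper's Case~(a) analysis, where $a_i=q_{ij}b_i$ does occur and non-diagonal $T_j$ arise.
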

	
	\begin{proof}
	Let $T_i$ be diagonalizable and let $|a_i|\ne |b_i|$ for some $i \in \{1, \dotsc, k\}$. Then there is a basis $\beta$ (not necessarily orthonormal) of $\mathbb{C}^2$ with respect to which 
		\[
		T_i=\begin{pmatrix}
			a_i & 0\\
			0 & b_i\\
		\end{pmatrix}  \quad \text{and} \quad T_j=\begin{pmatrix}
			c_j & d_j\\
			e_j & f_j\\
		\end{pmatrix} \quad (j \ne i)
		\]
for some scalars $c_j, d_j, e_j$ and $f_j$. By the relation $T_iT_j=q_{ij}T_jT_i$ and $|q_{ij}|=1$, we have that 
		\begin{equation}\label{14.3}
			a_ic_j=q_{ij}a_ic_j, \quad a_id_j=q_{ij}b_id_j,
		\end{equation}
		\begin{equation}\label{14.4}
			b_ie_j=q_{ij}a_ie_j, \quad b_if_j=q_{ij}b_if_j.
		\end{equation}
		By (\ref{14.3}), $(a_i-q_{ij}b_i)d_j=0$ and so, either $d_j=0$ or $a_i=q_{ij}b_i$. Now $a_i=q_{ij}b_i$ does not hold, because, $|a_i| \ne |b_i|$ and $|q_{ij}|=1.$ Consequently, $d_j=0.$ Similarly, it follows from (\ref{14.4}) that $e_j=0$. Thus, each $T_j$ becomes a diagonal matrix with respect to $\beta$ and the desired conclusion follows.
	\end{proof}

 Let $\underline{T}=(T_1, \dotsc, T_k)$ be a $q$-commuting tuple of non-zero contractions on $\mathbb{C}^2$ and let $\sigma(T_j)=\{a_j, b_j\}$ for $1 \leq j \leq k$. If $T_1, \dotsc, T_k$ are all non-diagonalizable, then Proposition \ref{non-diag.} implies that $\underline{T}$ is a commuting tuple. Suppose atleast one among $T_1, \dotsc, T_k$ is diagonalizable. If each diagonalizable entity in $\underline{T}$ is a scalar map, then $\underline{T}$ is a commuting tuple by  Proposition \ref{diag._scalar}. So, we assume that $\underline{T}$ consists of atleast one diagonalizable entity, say $T_i$, which is not a scalar map. If $|a_i| \ne |b_i|$, then $\underline{T}$ is a commuting tuple by Proposition \ref{prop_204}. 
 
\medskip  
 
 Now we discuss the case when the diagonalizable matrices among $T_1, \dotsc, T_k$ (that are not scalar maps) have eigenvalues of same modulus. Without loss of generality, we assume that $T_1, \dotsc, T_m$ are all diagonalizable and are not scalar maps with $|a_i|=|b_i|$ for $1 \leq i \leq m \leq k$. Take any $T_j$ for $m+1 \leq j \leq k$. If $T_j$ is a scalar map, then $a_j=b_j$. If $T_j$ is non-diagonalizable, then it must not have distinct eigenvalues, otherwise $T_j$ becomes diagonalizable. Thus, in this case as well, we have $a_j=b_j$. Therefore, we have that $|a_j|=|b_j|$ for all $j=1, \dotsc, k$ and thus, there exist $r_1, \dotsc, r_k$ in $\T$ such that $b_j=r_ja_j$ for $ 1\leq j \leq k$. Now we discuss the following three remaining cases:
	\begin{enumerate}
		\item[(a)] there is $j \in \{1, \dotsc, k\}$ such that $r_j \ne \pm 1$;
		\item[(b)] $r_j=1$ for $1 \leq j \leq k$; 
		\item[(c)] $r_j \in \{1,-1\}$ for  $1 \leq j \leq k$ and there is $ i \in \{ 1, \dotsc, k\}$ such that $r_i=-1$.
	\end{enumerate}

	\vspace{0.2cm}
	
		\noindent \textbf{Case (a).} Without loss of generality, let us assume that $r_1 \ne \pm 1$. Then $b_1=r_1a_1$ and one can choose a basis of $\C^2$ with respect to which 
		\[
T_1= \begin{pmatrix}
			a_1 & 0\\
			0 & r_1a_1\\
		\end{pmatrix} \quad \text{and}  \quad T_j= \begin{pmatrix}
			c_j & d_j\\
			e_j & f_j\\ 
		\end{pmatrix} \quad (1<j \leq k)
		\]
for some scalars $c_j, d_j, e_j$ and $f_j$. Since $(T_1, \dotsc, T_k)$ is $q$-commuting, we must have $T_1T_j=q_{1j}T_jT_1$ with $|q_{1j}|=1$. Then we have
	\begin{equation}\label{14.5}
		a_1c_j=q_{1j}a_1c_j, \quad a_1d_j=r_1q_{1j}a_1d_j,
	\end{equation}
	\begin{equation}\label{14.6}
		r_1a_1e_j=q_{1j}a_1e_j, \quad r_1a_1f_j=r_1q_{1j}a_1f_j.
	\end{equation}
	\begin{enumerate}

\item If $q_{1j}=1$, then it follows from (\ref{14.5}) \& (\ref{14.6})  that $(1-r_1)a_1d_j=0$ and $(1-r_1)a_1e_j=0$ respectively. Since $r_1 \ne 1$ and $a_1 \ne 0,$ we have that $d_j=0=e_j$ and so, $T_j=\begin{pmatrix}
			c_j & 0 \\
			0 & f_j
		\end{pmatrix}.$

\item If $q_{1j}=r_1,$ then $a_1d_j(1-r_1^2)=0$, $a_1(1-r_1)c_j=0$ and $a_1f_j(r_1-r_1^2)=0$ by (\ref{14.5}) \& (\ref{14.6}). Since $r_1 \ne \pm 1$ and $a_1 \ne 0,$ we have $c_j=d_j=f_j=0$. Thus, $T_j=\begin{pmatrix}
			0 & 0 \\
			e_j & 0
		\end{pmatrix}.$

\item If $q_{1j}=\overline{r_1}$, then $a_1c_j(1-\overline{r_1})=0, a_1(r_1-\overline{r_1})e_j=0$ and $a_1f_j(1-r_1)=0$ by  (\ref{14.5}) \& (\ref{14.6}). Since $r_1 \ne \pm 1$ and $a_1 \ne 0$, we have $c_j=e_j=f_j=0$ and so, $T_j=\begin{pmatrix}
			0 & d_j \\
			0 & 0
		\end{pmatrix}.$
	\end{enumerate}
If $q_{1j} \notin \{ 1, r_1, \overline{r_1}\}$, then we have by (\ref{14.5}) and (\ref{14.6}) that $c_j=d_j=e_j=f_j=0$.  Hence, $T_j=0$ which is a contradiction.	Therefore, $q_{1j} \in \{1, r_1, \overline{r_1}\}$. Let if possible, there are two indices $j \ne l$ such that $q_{1j}=r_1$ and $q_{1l}=\overline{r_1}$. From the above discussion and by $T_jT_l=q_{jl}T_lT_j$, we have that 
	\[T_j=\begin{pmatrix}
		0 & 0 \\
		e_j & 0
	\end{pmatrix}, \quad T_l=\begin{pmatrix}
		0 & d_l \\
		0 & 0
	\end{pmatrix}
	\quad \text{and so,} \quad 
	\begin{pmatrix}
		0 & 0 \\
		0 & e_jd_l
	\end{pmatrix}=\begin{pmatrix}
		q_{jl}d_le_j & 0 \\
		0 & 0
	\end{pmatrix}.
	\] 
	Thus, atleast one of $e_j$ or $d_l$ is zero which is a contradiction to the fact that all $T_i$'s are non-zero. Hence, either $q_{1j}\in \{1,r_1\}$ or $q_{1j} \in \{1, \overline{r_1}\}$ for $1 \leq j \leq k.$ 
		
	\vspace{0.2cm}
	
\noindent \textbf{Case (b).} Assume that $r_j=1$ for $1 \leq j \leq k$. In this case, every diagonalizable contraction among $T_1, \dotsc, T_k$ is a scalar map. It follows from Proposition \ref{diag._scalar} that 
	$(T_1, \dotsc, T_k)$ is a commuting tuple. 
		
	\vspace{0.2cm}

\noindent \textbf{Case (c).} Let us assume that $r_j \in \{1,-1\}$ for $1 \leq j \leq k$ and $r_j=-1$ for atleast one $j$. Without loss of generality, let us assume that $r_1=-1$, i.e. $a_1=-b_1.$ Choose a basis of $\C^2$ with respect to which we have 
\[
T_1= \begin{pmatrix}
	a_1 & 0\\
	0 & -a_1\\
\end{pmatrix} \quad \text{and}  \quad T_j= \begin{pmatrix}
	c_j & d_j\\
	e_j & f_j\\ 
\end{pmatrix} \quad (1<j \leq k)
\]
for some scalars $c_j, d_j, e_j$ and $f_j$. Using the relation $T_1T_j=q_{1j}T_jT_1$  and $a_1 \ne 0$, we have that 
\begin{equation}\label{14.7}
		c_j=q_{1j}c_j, \quad d_j=-q_{1j}d_j,
	\end{equation}
\begin{equation}\label{14.8}
		e_j=-q_{1j}e_j, \quad f_j=q_{1j}f_j.
	\end{equation}
If $q_{1j}\ne \pm 1$, then $c_j=d_j=e_j=f_j=0$ as $a_1 \ne 0.$ This leads to a contradiction as $T_j$ is non-zero and consequently, $q_{1j}\in \{1,-1\}$. The following are the remaining possibilities.

\begin{enumerate}
		\item Let $q_{1j}=1$. By (\ref{14.7}) \& (\ref{14.8}), $d_j=0=e_j$ and so, $T_j=\begin{pmatrix}
			c_j & 0\\
			0 & f_j
		\end{pmatrix}.$ Since $c_j$ and $f_j$ are eigenvalues of $T_j$ and $a_j=\pm b_j$,  we have that $\{c_j, f_j\}=\{a_j, b_j\}$ and $c_j=\pm f_j$.
		

\item Let $q_{1j}=-1$. We have by (\ref{14.7}) \& (\ref{14.8}) that $c_j=0=f_j$. Hence, $T_j=\begin{pmatrix}
			0 & d_j\\
			e_j & 0
		\end{pmatrix}$ for every $j \ne 1$. For any $l \ne j$ with $T_l=\begin{pmatrix}
			0 & d_l\\
			e_l & 0
		\end{pmatrix}$, the condition $T_jT_l=q_{jl}T_lT_j$ gives that 
		\[
		d_je_l=q_{jl}d_le_j \quad \text{and} \quad e_jd_l=q_{jl}e_ld_j.
		\]		
Since $T_j, T_l$ are non-zero, it follows that $d_j=0$ if and only if $d_l=0$. Also, $e_j=0$ if and only if $e_l=0$. Hence, if both $d_j, e_j$ are non-zero, then $d_l, e_l$ are non-zero as well.  
	\end{enumerate} 

\smallskip 

Thus, we arrive at the following model for a tuple $(T_1, \dotsc, T_k)$ of $q$-commuting contractions acting on $\C^2$. To explain it clearly, we write $\eta_\alpha:=\{i : 1 \leq i \leq k, \ T_1T_i=\alpha T_iT_1 \}$ for $\alpha \in \C$.

\begin{thm}\label{structure thm}
		Let $\underline{T}=(T_1, \dotsc, T_k)$ be a $q$-commuting tuple of non-zero contractions acting on $\mathbb{C}^2$. Then either $\underline{T}$ is commuting or $($upto a permutation of indices$)$ there is a basis of $\C^2$ $($not necessarily orthonormal$)$ with respect to which we have the following.
		\begin{enumerate}
			\item[\textbf{Type-$I$}:] If $T_1=\begin{pmatrix}
				a & 0 \\
				0 & ra\\
			\end{pmatrix}$ with $a \ne 0, r \in \T\setminus \{\pm 1\}$, then $q_{1j}\in \{1, r\}$ and we have
			\[
			T_j=\left\{
			\begin{array}{ll}
				\begin{pmatrix}
					c_j & 0 \\
					0 & f_j\\
				\end{pmatrix} \ \  (|c_j|=|f_j| \ne 0)& \mbox{if} \ j \in \eta_1, \\ \\
				\begin{pmatrix}
					0 & 0 \\
					e_j & 0\\
				\end{pmatrix} \ \  (e_j \ne 0) &  \mbox{if} \ j \in \eta_r, 
			\end{array} 
			\right. \quad \quad (2 \leq j \leq  k).
			\]	

\item[\textbf{Type-$II$}:] If $T_1=\begin{pmatrix}
				a & 0 \\
				0 & ra\\
			\end{pmatrix}$ with $a \ne 0, r \in \T \setminus \{\pm 1\}$, then $q_{1j}  \in \{1, \overline{r}\}$ and we have
			\[
			T_j=\left\{
			\begin{array}{ll}
				\begin{pmatrix}
					c_j & 0 \\
					0 & f_j\\
				\end{pmatrix} \ \  (|c_j|=|f_j| \ne 0)& \mbox{if} \ j \in \eta_1, \\ \\
				\begin{pmatrix}
					0 & d_j \\
					0 & 0\\
				\end{pmatrix} \ \  (d_j \ne 0) &  \mbox{if} \ j \in \eta_{\overline{r}},
			\end{array} 
			\right. \quad \quad (2 \leq j \leq  k).
			\]	

\item[\textbf{Type-$III$}:] If $T_1=\begin{pmatrix}
				a & 0 \\
				0 & -a\\
			\end{pmatrix}$ with $a \ne 0$ then $q_{1j} \in \{1, -1\}$ and we have
			\[
			T_j=\left\{
			\begin{array}{ll}
				\begin{pmatrix}
					c_j & 0 \\
					0 & \pm c_j\\
				\end{pmatrix} \ \  (c_j \ne 0)& \mbox{if} \ j \in \eta_1, \\ \\
				\begin{pmatrix}
					0 & d_j \\
					e_j & 0\\
				\end{pmatrix} \ \  (d_j \ne 0 \ \text{or} \  e_j \ne 0) &  \mbox{if} \ j \in \eta_{-1}, 
			\end{array} 
			\right. \quad \quad  (2 \leq j \leq  k).
			\]	
		\end{enumerate}
			\end{thm}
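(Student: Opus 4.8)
The plan is to assemble the classification directly from the three commutativity criteria proved above—Propositions~\ref{non-diag.}, \ref{diag._scalar} and \ref{prop_204}—together with the spectral reduction carried out just before the statement. The overarching strategy is to first dispose of every configuration that forces $\underline T$ to commute, and then, in the genuinely noncommutative regime, to diagonalize a distinguished entry $T_1$ and read off the admissible shapes of the remaining $T_j$ from the intertwining relation $T_1T_j=q_{1j}T_jT_1$ by comparing entries.

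First I would clear the commuting cases. If every $T_i$ is non-diagonalizable, Proposition~\ref{non-diag.} already yields that $\underline T$ commutes. Otherwise at least one entry is diagonalizable, and I distinguish whether the diagonalizable entries are scalar maps: if they all are, Proposition~\ref{diag._scalar} gives commutativity, while if some diagonalizable $T_i$ has eigenvalues of distinct moduli (in particular $T_i$ is then non-scalar), Proposition~\ref{prop_204} again gives commutativity. Thus the only surviving situation is that $\underline T$ has a diagonalizable, non-scalar entry all of whose eigenvalues share a common modulus. I would then note that \emph{every} $T_j$ must have equal-modulus eigenvalues: a scalar map has a repeated eigenvalue, and a non-diagonalizable $2\times2$ matrix necessarily has a repeated eigenvalue, so $|a_j|=|b_j|$ in all cases. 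Writing $b_j=r_ja_j$ with $r_j\in\T$ fixes the notation for the trichotomy.

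The core of the argument is the three-case split on the values $r_j$: either some $r_j\neq\pm1$ (Case~(a)), or all $r_j=1$ (Case~(b)), or all $r_j\in\{1,-1\}$ with at least one equal to $-1$ (Case~(c)). Case~(b) means every diagonalizable entry is a scalar map, so Proposition~\ref{diag._scalar} closes it as commuting. In Cases~(a) and~(c) I would, after a permutation, take $r_1\neq\pm1$ (respectively $r_1=-1$), diagonalize $T_1$, and substitute the generic $T_j=\begin{pmatrix} c_j & d_j \\ e_j & f_j\end{pmatrix}$ into $T_1T_j=q_{1j}T_jT_1$. Comparing the four entries, using $a\neq0$ and $|q_{1j}|=1$, confines $q_{1j}$ to the finite set $\{1,r_1,\overline{r_1}\}$ (respectively $\{1,-1\}$) and pins down which entries of $T_j$ must vanish, producing exactly the diagonal and strictly triangular shapes listed under Type-$I$, Type-$II$ and Type-$III$; the accompanying modulus conditions ($|c_j|=|f_j|$ in the diagonal case, and $c_j=\pm f_j$ in Type-$III$ since $\{c_j,f_j\}=\{a_j,b_j\}$) come out of the same entry comparison.

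The step I expect to be the main obstacle is the dichotomy inside Case~(a) that separates Type-$I$ from Type-$II$. After the entrywise analysis, both $q_{1j}=r_1$ (forcing a strictly lower-triangular $T_j$) and $q_{1l}=\overline{r_1}$ (forcing a strictly upper-triangular $T_l$) are individually admissible, so a priori a single tuple could mix the two. I would rule this out using the nonzero hypothesis: if such $j\neq l$ coexisted, then evaluating $T_jT_l=q_{jl}T_lT_j$ on the two strictly triangular forms forces the product of the two surviving off-diagonal entries to vanish, contradicting that all $T_i$ are nonzero. Hence either all $q_{1j}\in\{1,r_1\}$ or all $q_{1j}\in\{1,\overline{r_1}\}$, which is precisely the split into Type-$I$ and Type-$II$. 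The remaining verifications in Case~(c) are then routine, so the whole statement follows by collecting these cases.
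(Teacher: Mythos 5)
Your proposal is correct and follows essentially the same route as the paper: eliminating the commuting configurations via Propositions \ref{non-diag.}, \ref{diag._scalar} and \ref{prop_204}, reducing to equal-modulus eigenvalues with $b_j=r_ja_j$, splitting into the three cases on the $r_j$, diagonalizing $T_1$ and comparing entries in $T_1T_j=q_{1j}T_jT_1$ to confine $q_{1j}$ to $\{1,r_1,\overline{r_1}\}$ (resp.\ $\{\pm 1\}$), and in particular your key step---ruling out the coexistence of $q_{1j}=r_1$ and $q_{1l}=\overline{r_1}$ by applying $T_jT_l=q_{jl}T_lT_j$ to the two strictly triangular forms and contradicting nonvanishing---is exactly the paper's argument for the Type-$I$/Type-$II$ dichotomy.
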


\smallskip 

\begin{rem} 
Note that Type-$I$ and Type-$II$ correspond to $q_{1j}\in \{1, r\}$ and $q_{1j} \in \{1, \overline{r}\}$ in Case (a) respectively, and that Type-$III$ corresponds to Case (c). The following diagram summarizes Theorem \ref{structure thm}.
	
	\vspace{0.2cm}
	\begin{center}
		\begin{tikzpicture}
			\tikzset{edge from parent/.style={draw,edge from parent path={(\tikzparentnode.south)-- +(0,-1pt)-| (\tikzchildnode)}}}
			\Tree [.\text{$q$-commuting tuple $(T_1, \dotsc, T_k)$}
			[.\text{$\Lambda_1 = \emptyset$}
			[.\text{commuting}
			] ]
			[.\text{$\Lambda_1 \ne \emptyset$}
			[.\text{$\Lambda_1$ consists of only scalar maps}
			[.\text{commuting \qquad}  ] ]
			[. \text{some $T_j$ in $\Lambda_1$ is not a scalar map}
			[.\text{$|a_j| = |b_j|$ }  
			[.\text{Type-I} ]
			[.\text{Type-II} ] 
			[.\text{Type-III} ] ] 
			[.\text{$|a_j|\ne |b_j|$}  [\text{commuting} ] ] ] ] ] ]
		\end{tikzpicture}
	\end{center}
\end{rem}

The next result gives one more sufficient condition for a $q$-commuting tuple of contractions on $\C^2$ to become a commuting tuple. It is a simple application of Theorem \ref{structure thm}.

\begin{cor}\label{cor_q_comm}
Let $\underline{T}=(T_1, \dotsc, T_k)$ be a $q$-commuting tuple of contractions on $\C^2$. If $T_j$ has no distinct eigenvalues for $1 \leq j \leq k$, then $\underline{T}$ is commuting. 
\end{cor}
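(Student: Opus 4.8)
The plan is to deduce this directly from the structure theorem (Theorem \ref{structure thm}), which asserts a dichotomy: either $\underline{T}$ is already commuting, or, after a permutation of indices and a suitable change of basis, $\underline{T}$ falls into exactly one of the families Type-$I$, Type-$II$, Type-$III$. My strategy is to show that the hypothesis ``no $T_j$ has distinct eigenvalues'' is incompatible with each of these three families, leaving the commuting alternative as the only possibility.

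First I would record the elementary fact underlying everything. A $2 \times 2$ matrix has no distinct eigenvalues precisely when its two eigenvalues coincide, i.e.\ $a_j = b_j$. Such a matrix is then either non-diagonalizable (a genuine Jordan block) or a scalar map $a_j I$; it can never be diagonalizable without being scalar. Hence, under the hypothesis, every diagonalizable entry of $\underline{T}$ is forced to be a scalar map.

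Next I would inspect the distinguished generator $T_1$ appearing in each of the three types. In Type-$I$ and Type-$II$ one has $T_1 = \mathrm{diag}(a, ra)$ with $a \ne 0$ and $r \in \T \setminus \{\pm 1\}$, so the eigenvalues $a$ and $ra$ are distinct; in Type-$III$ one has $T_1 = \mathrm{diag}(a, -a)$ with $a \ne 0$, whose eigenvalues $a$ and $-a$ are again distinct. Since Theorem \ref{structure thm} merely permutes the indices, and since having distinct eigenvalues is a basis-independent property, this distinguished matrix is genuinely one of the original $T_1, \dotsc, T_k$ and carries two distinct eigenvalues---contradicting the hypothesis. Therefore none of the three types can occur, and the theorem forces $\underline{T}$ to be commuting.

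The argument has essentially no obstacle; the only point deserving a little care is the bookkeeping around the permutation in Theorem \ref{structure thm}, so as to be sure that excluding the three types for the relabelled copy really excludes them for the original tuple. As a sanity check (and an alternative route that bypasses the classification entirely), one may instead appeal directly to Proposition \ref{diag._scalar}: the observation in the second paragraph shows that every diagonalizable contraction in $\underline{T}$ is a scalar map, whence $\underline{T}$ is commuting.
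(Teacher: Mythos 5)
Your proposal is correct, and it takes a genuinely different (and arguably cleaner) route through the case analysis. The paper also reduces to Theorem \ref{structure thm}, but it does not derive a contradiction in all three cases: for Type-$I$ it uses the equal-eigenvalue hypothesis to force $c_j'=f_j'$, so every diagonal entry becomes a scalar map and the tuple is seen to commute outright (the lower-triangular nilpotent entries multiply to zero in both orders); Type-$II$ is handled by passing to adjoints, which converts it to Type-$I$; only in Type-$III$ does the paper argue by contradiction, via $\sigma(T_j')=\{a_j,-a_j\}$ with $a_j \neq 0$. You instead observe that the distinguished matrix $T_1$ of each normal form has spectrum $\{a,ra\}$ with $r\in\T\setminus\{\pm 1\}$, or $\{a,-a\}$, with $a\neq 0$ in both cases --- hence distinct eigenvalues, a similarity-invariant property --- which eliminates all three types at one stroke; in fact the paper's Type-$I$ computation secretly contains the same contradiction, since $c_1'=f_1'$ would force $r=1$. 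One bookkeeping point your main route should record: Theorem \ref{structure thm} is stated for tuples of \emph{non-zero} contractions, whereas the corollary permits zero entries, so you need the same trivial reduction the paper makes explicitly (a zero matrix has no distinct eigenvalues and commutes with everything, so discard such entries first). Your fallback argument needs no such caveat and is actually the most economical proof available here: a $2\times 2$ matrix with a single eigenvalue is either non-diagonalizable or a scalar map (the zero matrix included), so Proposition \ref{diag._scalar} yields commutativity immediately, bypassing the classification entirely --- this is the same mechanism the paper itself deploys in Case (b) of its pre-classification discussion, but the paper does not use it to prove this corollary.
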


\begin{proof}
Without loss of generality, we assume that $\underline{T}$ consists of non-zero contractions. We have by Theorem \ref{structure thm} that $\underline{T}$ is either commuting or it is similar to a $q$-commuting tuple of contractions $\underline{T}'=(T_1' \dotsc, T_k')$  on $\C^2$ that is of Type-$I$, Type-$II$ or Type-$III$. Since similar matrices have same eigenvalues, we have by the hypothesis that $T_j'$ has no distinct eigenvalues for $1 \leq j \leq k$. 

\medskip 

\noindent \textbf{Case (a).} Suppose that $\underline{T}'$ is of Type-$I$. Then one can choose $m \in \{1, \dotsc, k\}$ such that 
\[
T_j'=\begin{pmatrix}
c_j' & 0\\
0 & f_j'\\
\end{pmatrix} \quad \text{and} \quad  T_i'=\begin{pmatrix}
0 & 0 \\
e_i' & 0\\
\end{pmatrix}
\]
with $|c_j'|=|f_j'|$ for $1 \leq j \leq m$ and $m+1 \leq i \leq k$. Since the eigenvalues of $T_j'$ are equal, we have that $c_j'=f_j'$ for $1 \leq j \leq m$. Consequently, $\underline{T}'$  and hence, $\underline{T}$ is a commuting tuple. 

\medskip

\noindent \textbf{Case (b).} Suppose that $\underline{T}'$ is of Type-$II$. Then $\underline{T}'^*=(T_1'^*, \dotsc, T_k'^*)$ is of Type-$I$ and thus, by Case (a), $(T_1^*, \dotsc, T_k^*)$ is a commuting tuple. Hence, $\underline{T}$ is a commuting tuple.

\medskip

\noindent \textbf{Case (c).} If $\underline{T}'$ is of Type-$III$, then we have by Theorem \ref{structure thm} that there is $j \in \{1, \dotsc, k\}$ with $\sigma(T_j')=\{a_j, -a_j\}$ for some non-zero scalar $a_j$. This is a contradiction to the fact that the eigenvalues of $T_j'$ are not distinct. The proof is now complete.
\end{proof}

\medskip

\section{Dilation of some classes of $2 \times 2$ $q$-commuting contractive matrices} \label{sec_03}

\medskip

\noindent As we have mentioned earlier that a commuting tuple of contractions acting on $\C^2$ always dilates to a commuting tuple of unitaries, see \cite{Holbrook} by Holbrook. In this Section, we find a $\widetilde{q}$-unitary dilation for a $q$-commuting tuple of contractions on $\C^2$ that is unitarily equivalent to one of Type-$I$, Type-$II$ or Type-$III$ as in Theorem \ref{structure thm}, where $\widetilde q \subseteq q \cup \{1\}$. Then for a general $q$-commuting tuple of contractions on $\C^2$, we obtain upto similarity a $\widetilde{q}$-commuting dilation that consists of a scalar multiple of unitaries. Let us begin with the first step.

\begin{thm}\label{thm_dil_III}
Let $\underline{T}=(T_1, \dotsc, T_k)$ be a $q$-commuting tuple of contractions on $\C^2$. If $\underline{T}$ is unitarily equivalent to a $q$-commuting tuple of contractions on $\C^2$ that is of Type-$I$ or Type-$II$ or Type-$III$ as in Theorem \ref{structure thm}, then $\underline{T}$ admits a $\widetilde{q}$-unitary dilation with $\widetilde{q} \subseteq q \cup \{1\}$. 
\end{thm}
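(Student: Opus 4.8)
The plan is to handle the three types separately, exploiting the fact that in each case the structure theorem reduces the tuple to a single nontrivial $2\times 2$ ``off-diagonal'' block together with a collection of diagonal (hence scalar-multiple-of-unitary after normalization) pieces. In each type, $T_1$ is diagonal with eigenvalues of equal modulus, and every $T_j$ with $j \in \eta_1$ is diagonal while every $T_j$ with $j \notin \eta_1$ is purely off-diagonal of a single fixed shape. The key observation is that all the off-diagonal matrices of a given type are scalar multiples of one fixed matrix, exactly as in the proofs of Theorems \ref{lem_nil} and \ref{thm_ni}: I would pick the entry of largest modulus among the off-diagonal $T_j$'s, call the corresponding matrix $T_m$, and write $T_j = w_j T_m$ with $|w_j| \le 1$. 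Similarly, each diagonal $T_j$ is a linear combination (via projection onto the two coordinate axes) of at most two rank-one diagonal contractions, all commuting with $T_1$.

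First I would reduce to the unitary-equivalence issue: since dilation is preserved under unitary equivalence (if $\underline U$ dilates $\underline T'$ via isometry $V$ and $W$ is unitary with $W^*T_j'W = T_j$, then $\underline U$ dilates $\underline T$ via $VW$), it suffices to produce a $\widetilde q$-unitary dilation for a representative tuple of each type. Next, for each type I would organize the tuple into the diagonal family $\{T_j : j \in \eta_1\}$ and the single off-diagonal shape; the diagonal matrices pairwise commute and commute with $T_1$ (for Type-$I$ and Type-$II$ this is forced by the structure, and for Type-$III$ the diagonal entries are $\pm c_j$). The heart of the argument is to dilate the pair consisting of $T_1$ (or its normalized unitary version) together with one representative off-diagonal contraction using Theorem \ref{thm_KM} for a $q$-commuting pair, obtaining a $q$-commuting pair of unitaries $(U_1, U_m)$ on $\mathcal K$ with intertwining isometry $V$. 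I would then build the full dilating tuple $\underline N$ by setting $N_j = w_j U_m$ for the off-diagonal indices and $N_j$ equal to an appropriately rescaled diagonalizable normal unitary-multiple for the diagonal indices, checking that $\underline N$ is a doubly $\widetilde q$-commuting tuple of normal contractions and that $V^*(N_1^{s_1}\cdots N_k^{s_k})V = T_1^{s_1}\cdots T_k^{s_k}$ for all nonnegative exponents, just as in \eqref{eqn_318}--\eqref{eqn_319} and \eqref{eqn_320}--\eqref{eqn_321}. Finally, since $\underline N$ is a doubly $\widetilde q$-commuting tuple of normal contractions, Theorem \ref{thm_dcq}(2) upgrades it to a genuine $\widetilde q$-unitary dilation, closing the argument.

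I expect the main obstacle to be the bookkeeping when $\eta_1$ contains diagonal contractions that are \emph{not} scalar maps: these carry two independent eigenvalues $c_j, f_j$ (of equal modulus in Types I and II, or $f_j = \pm c_j$ in Type III), so the products $T_1^{s_1}\cdots T_k^{s_k}$ involve independent powers of several distinct diagonal operators, and I must dilate the whole abelian family of diagonal matrices simultaneously rather than collapsing it to a single representative the way the off-diagonal family collapses. The clean route is to diagonalize the abelian family jointly—writing each diagonal $T_j$ as $c_j P_1 + f_j P_2$ where $P_1, P_2$ are the two spectral projections common to all of them—and to dilate each scalar contraction $c_j, f_j \in \overline{\D}$ to a unitary (Sz.-Nagy) while matching these with the off-diagonal dilation $(U_1,U_m)$ so that the commutation signs $q_{ij} \in \widetilde q$ are respected. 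The delicate point is verifying that these two dilations (the abelian diagonal part and the off-diagonal $q$-commuting pair) can be realized on a common space $\mathcal K$ with a common isometry $V$ so that all mixed products collapse correctly under $V^*(\,\cdot\,)V$; I anticipate this is where one invokes Theorem \ref{thm_dcq} most essentially, since that result already supplies a simultaneous dilation for doubly $q$-commuting normal contractions and so subsumes the compatibility of the diagonal and off-diagonal pieces once $\underline N$ has been assembled.
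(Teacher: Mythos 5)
Your plan works for Types~$I$ and~$II$, and there it is essentially the paper's argument once you commit to the family-dilation route rather than the pair-dilation route: since $|c_j|=|f_j|$, each diagonal $T_j$ is $c_jR_{\alpha_j}$ with $R_{\alpha_j}=\operatorname{diag}(1,\alpha_j)$ already \emph{unitary}, so no Sz.-Nagy dilation of the scalars $c_j,f_j$ and no spectral-projection decomposition is needed; the paper simply observes that $\{T_m\}\cup\{R_{\alpha_i}:i\in\eta_1\}$ is a doubly $q'$-commuting family of contractions and applies Theorem~\ref{thm_dcq}(2) to dilate it \emph{simultaneously}, then sets $N_i=c_i\widetilde{R_{\alpha_i}}$ and $N_j=w_jU_m$. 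Note that your ``heart of the argument'' step --- dilating only the pair $(T_1,T_m)$ via Theorem~\ref{thm_KM} --- cannot work on its own when $\eta_1$ contains several non-scalar diagonals: the ratios $\alpha_i=f_i/c_i$ are in general distinct unimodular numbers, and from a single pair $(U_1,U_m)$ you cannot manufacture operators satisfying $N_iU_m=\alpha_iU_mN_i$ for all of them. Your closing paragraph does recognize this and falls back on Theorem~\ref{thm_dcq}, which is the correct (and the paper's) move; Type~$II$ then follows from Type~$I$ by taking adjoints, since $AB=qBA$ iff $A^*B^*=qB^*A^*$.

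The genuine gap is Type~$III$. Your premise that ``all the off-diagonal matrices of a given type are scalar multiples of one fixed matrix, exactly as in Theorems~\ref{lem_nil} and~\ref{thm_ni}'' is false here: when all $d_j,e_j\neq 0$, the structure only yields $T_j=w_jR_{q_{mj}}T_m$ with $q_{mj}=\pm 1$, so the off-diagonal members occupy \emph{two} rays, spanned by $T_m$ and by $R_{-1}T_m$, and these anti-commute rather than commute (take $T_2=\epsilon\bigl(\begin{smallmatrix}0&1\\1&0\end{smallmatrix}\bigr)$ and $T_3=\epsilon\bigl(\begin{smallmatrix}0&1\\-1&0\end{smallmatrix}\bigr)$, both in $\eta_{-1}$ for $T_1=aR_{-1}$ but not proportional). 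The single-ray collapse in Theorems~\ref{lem_nil} and~\ref{thm_ni} was forced by nilpotency/non-invertibility, which is unavailable here. Consequently the paper must dilate the anti-commuting pair $(R_{-1},T_m)$, define $N_i=w_i\widetilde{R_{q_{mi}}}U_m$ for $i\in\eta_{-1}$, and then prove the moment identity by explicit sign bookkeeping: one needs $(\widetilde{R_{-1}}U_m)^{n}=(-1)^{n(n-1)/2}\,\widetilde{R_{-1}}^{\,n}U_m^{\,n}$ and its iterates, producing factors $(-1)^{\theta(m)+p+t}$ when the mixed monomial $N_{\sigma(1)}^{m_1}\cdots N_{\sigma(k)}^{m_k}$ is reduced to the ordered form $\widetilde{R_{-1}}^{\,s}U_m^{\,t}$ to which the dilation identity (\ref{eqn_219}) applies --- and one must check that the \emph{same} signs accumulate on the $T$-side. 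This computation is the real content of Case~III and is entirely absent from your sketch; without it the collapse $V^*N_1^{m_1}\cdots N_k^{m_k}V=T_1^{m_1}\cdots T_k^{m_k}$ is unproven for monomials mixing the two anti-commuting off-diagonal generators. (The degenerate Type-$III$ sub-cases $d_j\equiv 0$ or $e_j\equiv 0$ do reduce to Types~$I$/$II$, as the paper notes, so your reduction is fine there.)
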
		

	\begin{proof} 
Suppose $\underline{T}$ is unitarily equivalent to a $q$-commuting tuple of contractions on $\C^2$, say $\underline{T}'=(T_1', \dotsc, T_k')$, that is of Type-$I$ or Type-$II$ or Type-$III$. In general, if a tuple of commuting contractions $(T_1, \dotsc, T_k)$ on a Hilbert space $\HS$ dilates to a commuting tuple of unitaries $(U_1, \dotsc, U_k)$ on a space $\mathcal{K} \supseteq \HS$, then with respect to a possible different embedding of $\HS$ into $\mathcal{K}$, $(U_1, \dotsc, U_k)$ becomes a dilation of a unitarily equivalent tuple $(U_0T_1U_0^{-1}, \dotsc, U_0T_kU_0^{-1})$, for any unitary $U_0$ on $\mathcal{H}$. Evidently, the same holds in the $q$-commuting setting. Therefore, we assume without loss of generality that $\underline{T}$ falls into one of Type-$I$, Type-$II$, Type-$III$. 

\medskip 

\noindent \textbf{Case I.} Assume that $\underline{T}$ is of Type-$I$. We have by Theorem \ref{structure thm} that 	
\[
T_1=\begin{pmatrix}
	a & 0 \\
	0 & ra\\
\end{pmatrix}
 \quad \text{and} \quad 		T_j=\left\{
		\begin{array}{ll}
			\begin{pmatrix}
				c_j & 0 \\
				0 & f_j\\
			\end{pmatrix} \ \  (|c_j|=|f_j| \ne 0)& \mbox{if} \ j \in \eta_1, \\ \\
			\begin{pmatrix}
				0 & 0 \\
				e_j & 0\\
			\end{pmatrix} \ \  (e_j \ne 0) &  \mbox{if} \ j \in \eta_{r},
		\end{array} 
		\right. 
		\]	
where  $a \ne 0, r \in \T \setminus \{\pm 1\}$  and $q_{1j} \in \{1, r \}$ for $2 \leq j \leq  k$. Some routine calculations yield 
\begin{equation}\label{eqn_209}
	T_iT_j=\left\{
\begin{array}{ll}
	T_jT_i & \mbox{if} \ i, j \in \eta_1, \\ 
	\alpha_i T_jT_i & \mbox{if} \ i \in \eta_1, j \in \eta_{r}, \\
	T_jT_i & \mbox{if} \ i, j \in \eta_{r}, 
\end{array} 
\right.  \quad \text{where} \quad \alpha_i=\left\{
\begin{array}{ll}
	r & \mbox{if} \ i=1,\\ 
	f_i \slash c_i  & \mbox{if} \ i\ne 1. 
\end{array} 
\right.
\end{equation}
Note that $|\alpha_i|=1$ for every $i \in \eta_1$. Let $i, j \in \eta_1$. It is evident that $T_iT_j \ne 0$. Then $T_iT_j=q_{ij}T_jT_i=T_jT_i$ and so, $q_{ij}=1$. Let $i \in \eta_1, j \in \eta_r$. Again $T_iT_j \ne 0$. Thus, $T_iT_j=q_{ij}T_jT_i=\alpha_i T_jT_i$ and so,  $q_{ij}=\alpha_i$. For $i, j \in \eta_r$, we have $T_iT_j=0=T_jT_i$. In this case, we replace the $q_{ij}$ by $1$. Let $\widetilde{q}=\{\widetilde{q}_{ij}: 1 \leq i<j \leq k\}$ be such that $\widetilde{q}_{ij}=q_{ij}$ if $i$ or $j$ is in $\eta_1$ and $\widetilde{q}_{ij}=1$ if $i, j \in \eta_r$. Evidently, $\underline{T}$ is a $\widetilde{q}$-commuting tuple and $\widetilde{q} \subseteq q \cup \{1\}$. Let $m \in \eta_r$ be such that $|e_m|=\max_{j \in \eta_r}|e_j|$. Define $w_j=e_j\slash e_m$ for $j \in \eta_r$. As per notations, $1 \in \eta_1$ and $c_1=a$. Again, by some simple calculations, we have that 
\begin{equation}\label{eqn_210}
T_j=w_jT_m \quad \text{and} \quad T_i=c_iR_{\alpha_i} \quad (j \in \eta_r, \ i \in \eta_1),
\end{equation}
where we define 
\[
R_\alpha=\begin{pmatrix}
1 & 0 \\
0 & \alpha
\end{pmatrix} \quad (\alpha \in \C).
\]
It is not difficult to see that 
\begin{equation}\label{eqn_211}
\begin{split}
R_{\alpha_i}T_m=\alpha_iT_mR_{\alpha_i}, 
\quad
 R_{\alpha_i}^*T_m=\overline{\alpha}_iT_mR_{\alpha_i}^*, 
 \quad 
 R_{\alpha_i}R_{\alpha_\ell}=R_{\alpha_\ell}R_{\alpha_i} 
\quad \text{and} \quad  
R_{\alpha_i}^*R_{\alpha_\ell}=R_{\alpha_\ell}R_{\alpha_i}^*
\end{split}
\end{equation}
for every $i, \ell \in \eta_1$. Therefore, $\{T_m, R_{\alpha_i} : i \in \eta_1\}$ is a doubly $q'$-commuting family of contractions on $\C^2$, where the family $q'$ of unimodular scalars is determined by (\ref{eqn_211}). It follows from part-(2) of Theorem \ref{thm_dcq} that there is a $q'$-commuting family of unitaries $\{U_m, \widetilde{R_{\alpha_i}} : i \in \eta_1 \}$ on a space $\mathcal{K}$ and an isometry $V:\C^2 \to \mathcal{K}$ such that
\begin{equation}\label{eqn_212}
T_m^{s_1}\prod_{i \in \eta_1}R_{\alpha_i}^{s_i}=V^* U_m^{s_1}\prod_{i \in \eta_1}\widetilde{R_{\alpha_i}}^{s_i}V
\end{equation}
for all non-negative integers $s_1, s_i$ with $i \in \eta_1$ and that
\begin{equation}\label{eqn_213}
\begin{split}
\widetilde{R_{\alpha_i}}U_m=\alpha_iU_m\widetilde{R_{\alpha_i}},
 \quad 
 \widetilde{R_{\alpha_i}}^*U_m=\overline{\alpha}_iU_m\widetilde{R_{\alpha_i}}^*, 
 \quad  
 \widetilde{R_{\alpha_i}}\widetilde{R_{\alpha_\ell}}=\widetilde{R_{\alpha_\ell}}\widetilde{R_{\alpha_i}}
  \quad 
 \text{and} \quad 
 \widetilde{R_{\alpha_i}}^*\widetilde{R_{\alpha_\ell}}=\widetilde{R_{\alpha_\ell}}\widetilde{R_{\alpha_i}}^*. \\
\end{split}
\end{equation}
We define an operator tuple $\underline{N}=(N_1, \dotsc, N_k)$ in the following way:
\begin{equation*}
N_i=\left\{
\begin{array}{ll}
	c_i\widetilde{R_{\alpha_i}} & \mbox{if} \ i\in \eta_1,\\ 
	w_iU_m   & \mbox{if} \ i \in \eta_r. 
\end{array} 
\right.
\end{equation*}
It is evident that each $N_i$ is a normal contraction on $\mathcal{K}$. Moreover, we have by (\ref{eqn_213}) that

\begin{equation}\label{eqn_214}
	N_iN_j=\left\{
\begin{array}{ll}
	N_jN_i & \mbox{if} \ i, j \in \eta_1, \\ 
	\alpha_i N_jN_i & \mbox{if} \ i \in \eta_1, j \in \eta_{r}, \\
	N_jN_i & \mbox{if} \ i, j \in \eta_{r} 
\end{array} 
\right. \quad \text{and} \quad 
N_iN_j^*=\left\{
\begin{array}{ll}
	N_j^*N_i & \mbox{if} \ i, j \in \eta_1, \\ 
	\overline{\alpha}_i N_j^*N_i & \mbox{if} \ i \in \eta_1, j \in \eta_{r}, \\
	N_j^*N_i & \mbox{if} \ i, j \in \eta_{r}. 
\end{array} 
\right.
\end{equation}
We show that $T_1^{m_1}\dotsc T_k^{m_k}=V^*N_1^{m_1}\dotsc N_k^{m_k}V$ for all non-negative integers $m_1, \dotsc, m_k$. It is clear from (\ref{eqn_214}) that $\underline{N}$ is a doubly $\widetilde{q}$-commuting tuple. Therefore, it suffices to  show that $T_{\sigma(1)}^{m_1}\dotsc T_{\sigma(k)}^{m_k}=V^*N_{\sigma(1)}^{m_1}\dotsc N_{\sigma(k)}^{m_k}V$ for some permutation $\sigma$ on $\{1, \dotsc, k\}$ and for all non-negative integers $m_1, \dotsc, m_k$. Without loss of generality, we can assume that $\{1, \dotsc, i\} \in \eta_1$ and $\{i+1, \dotsc, k\} \in \eta_r$. For non-negative integers $m_1, \dotsc, m_k$, we have that 
\begin{equation*}
\begin{split}
V^*N_1^{m_1} \dotsc N_k^{m_k}V
=V^*\prod_{1 \leq j \leq i}N_j^{m_j}\prod_{i+1 \leq j \leq k}N_j^{m_j}V
&=V^*\prod_{1 \leq j \leq i}c_j^{m_j}\widetilde{R_{\alpha_j}}^{m_j}\prod_{i+1 \leq j \leq k}w_j^{m_j}U_m^{m_j}V\\
&=\prod_{1 \leq j \leq i}c_j^{m_j}R_{\alpha_j}^{m_j}\prod_{i+1 \leq j \leq k}w_j^{m_j}T_m^{m_j} \qquad \qquad [\text{by} \ (\ref{eqn_212})]\\ 
&=\prod_{1 \leq j \leq i}T_j^{m_j}\prod_{i+1 \leq j \leq k}T_j^{m_j} \qquad \qquad \qquad \quad [\text{by} \ (\ref{eqn_210})]\\
&=T_1^{m_1} \dotsc T_k^{m_k}.
\end{split}
\end{equation*}
Therefore, $\underline{N}$ is a doubly $\widetilde{q}$-commuting tuple of contractions that dilates $\underline{T}$. We have the desired conclusion from Theorem \ref{thm_dcq}.

\medskip 

\noindent \textbf{Case II.}
Assume that $\underline{T}$ is of Type-$II$. It follows from Theorem \ref{structure thm} that 
\[
T_1=\begin{pmatrix}
	a & 0 \\
	0 & ra\\
\end{pmatrix}
 \quad \text{and} \quad 		T_j=\left\{
		\begin{array}{ll}
			\begin{pmatrix}
				c_j & 0 \\
				0 & f_j\\
			\end{pmatrix} \ \  (|c_j|=|f_j| \ne 0)& \mbox{if} \ j \in \eta_1, \\ \\
			\begin{pmatrix}
				0 & d_j \\
				0 & 0\\
			\end{pmatrix} \ \  (e_j \ne 0) &  \mbox{if} \ j \in \eta_{\overline{r}},
		\end{array} 
		\right. 
		\]	
where  $a \ne 0, r \in \T \setminus \{\pm 1\}$  and $q_{1j} \in \{1, \overline{r} \}$ for $2 \leq j \leq  k$. Note that for $|q|=1$, a pair $(A, B)$ of operators satisfies $AB=qBA$ if and only if $A^*B^*=qB^*A^*$. Thus, $\underline{T}^*=(T_1^*, \dotsc, T_k^*)$ is also a $q$-commuting tuple and is of Type-$I$. The desired conclusion follows from Case I.
\medskip 

\noindent  We mention that the proofs presented in Cases I and II also work when $r \in \T \setminus \{1\}$, i.e. one can choose $r=-1$ in both the above cases without any change in their respective proofs.
\medskip

\noindent \textbf{Case III.} Let $\underline{T}$ be of Type-$III$. We have by Theorem \ref{structure thm} that

 		\[
		T_1=\begin{pmatrix}
	a & 0 \\
	0 & -a\\
\end{pmatrix}
 \quad \text{and} \quad 		T_j=\left\{
		\begin{array}{ll}
			\begin{pmatrix}
				c_j & 0 \\
				0 & \alpha_j c_j\\
			\end{pmatrix} \ \  (c_j \ne 0, \alpha_j=\pm 1)& \mbox{if} \ j \in \eta_1, \\ \\
			\begin{pmatrix}
				0 & d_j \\
				e_j & 0\\
			\end{pmatrix} \ \  (d_j \ne 0 \ \text{or} \ e_j \ne 0) &  \mbox{if} \ j \in \eta_{-1},
		\end{array} 
		\right. 
		\]	
where  $a \ne 0$  and $q_{1j} \in \{\pm 1\}$ for $2 \leq j \leq  k$. If there exists $i \in \{1, \dotsc, k\}$ such that $d_i=0$, then it follows from the proof of Theorem \ref{structure thm} that $d_j=0$ for all $j \in \eta_{-1}$. The desired conclusion now follows from Case I. Suppose that there exists $i \in \{1, \dotsc, k\}$ such that $e_i=0$, then it follows from the proof of Theorem \ref{structure thm} that $e_j=0$ for all $j \in \eta_{-1}$. We have the desired conclusion from Case II.

\medskip 

We assume that $d_j, e_j$ are non-zero scalars for $j \in \eta_{-1}$. It is not difficult to see that $T_iT_j \ne 0$ for $1 \leq i<j \leq k$. Hence, we have unique unimodular scalars $q_{ij}$ such that $T_iT_j=q_{ij}T_jT_i$ holds for all $i, j$. Let $m \in\eta_{-1}$ be such that $ |d_m|=\underset{j\in \eta_{-1}}{\max}|d_j|$. Take $j\in \eta_{-1}$. We have by $T_mT_j=q_{mj}T_jT_m $ that $	e_md_j=q_{mj}e_jd_m=q_{mj}^2d_je_m$ and so, $q_{mj}=\pm 1$. Therefore, $d_je_m=q_{mj}d_me_j$ so that $q_{mj}\dfrac{e_j}{e_m}=\dfrac{d_j}{d_m}$ for all $j\in \eta_{-1}$. Our choice of $m \in\eta_{-1}$ implies that $\left|\dfrac{e_j}{e_m}\right|=\left|\dfrac{d_j}{d_m}\right|\leq 1$. Consequently, $|e_m|=\underset{j\in \eta_{-1}}{\max}|e_j|$. Let $w_j=\dfrac{d_j}{d_m}$ for $j \in \eta_{-1}$. Hence, $w_j\in \overline{\mathbb{D}}$ and $e_j=q_{mj}w_je_m$ for all $j\in \eta_{-1}$. Again, some routine calculations yield
\begin{equation}\label{eqn10.12}
	T_iT_j=\left\{
\begin{array}{ll}
	T_jT_i & \mbox{if} \ i, j \in \eta_1, \\ 
	\alpha_i T_jT_i & \mbox{if} \ i \in \eta_1, j \in \eta_{-1}, \\
	q_{mi}q_{mj} T_jT_i & \mbox{if} \ i, j \in \eta_{-1} 
\end{array} 
\right. 
\end{equation}
and
\begin{equation}\label{eqn109}
		T_j=w_jR_{q_{mj}}T_m \quad \text{and} \quad T_i=c_iR_{\alpha_i} \quad (j \in \eta_{-1}, i \in \eta_1).
	\end{equation}
It is easy to see that  $R_{-1}T_m=-T_mR_{-1}$ and $R_{-1}^*T_m=-T_mR_{-1}^*$. Thus, $(R_{-1}, T_m)$ is an anti-commuting pair of contractions. We have by Theorem \ref{thm_dcq} that there is an anti-commuting pair of unitaries $(\widetilde{R_{-1}}, U_m)$ acting on a space $\mathcal{K}$ and an isometry $V: \C^2 \to \mathcal{K}$ such that 
\begin{equation}\label{eqn_219}
V^*\widetilde{R_{-1}}^{t_1}U_m^{t_2}V=(R_{-1})^{t_1}T_m^{t_2}  
\end{equation}
for all non-negative integers $t_1, t_2$. For $i\in \eta_1$ and $j\in \eta_{-1}$, let us define 
\[
\widetilde{R_{\alpha_i}}=\left\{
\begin{array}{ll}
	I_\mathcal{K} & \mbox{if} \ \alpha_i=1, \\ 
	\widetilde{R_{-1}} & \mbox{if} \ \alpha_i=-1
\end{array} 
\right. \quad \text{and} \quad \widetilde{R_{q_{mj}}}=\left\{
\begin{array}{ll}
	I_\mathcal{K} & \mbox{if} \ q_{mj}=1, \\ 
	\widetilde{R_{-1}} & \mbox{if} \ q_{mj}=-1. 
\end{array} 
\right.
\]
From this definition, it is clear that 
		\begin{equation}\label{eqn1012}
\widetilde{R_{\alpha_i}}\widetilde{R_{q_{mj}}}=\widetilde{R_{q_{mj}}}\widetilde{R_{\alpha_i}}, \quad 
		\widetilde{R_{\alpha_i}}U_m=\alpha_iU_m\widetilde{R_{\alpha_i}} \quad \text{ and } \quad \widetilde{R_{q_{mj}}}U_m=q_{mj}U_m\widetilde{R_{q_{mj}}} 
		\end{equation}
for $i \in \eta_1$ and $j \in \eta_{-1}$. Let us define an operator tuple $\underline{N}=(N_1, \dotsc, N_k)$ as follows:		
\[
		N_i=\left\{
		\begin{array}{ll}
				c_i\widetilde{R_{\alpha_i}} & \mbox{if} \ i \in \eta_1, \\ 
				
			w_i\widetilde{R_{q_{mi}}}U_m & \mbox{if} \ i \in \eta_{-1}. 
		\end{array} 
		\right.
		\]
It is evident that each $N_i$ is a normal contraction on $\mathcal{K}$. Moreover, we have by (\ref{eqn1012}) that
\begin{equation}\label{eqn_221}
		N_iN_j=\left\{
		\begin{array}{ll}
			N_jN_i & \mbox{if} \ i, j \in \eta_1, \\ 
			\alpha_i N_jN_i & \mbox{if} \ i \in \eta_1, j \in \eta_{-1}, \\
			q_{mi}q_{mj} N_jN_i & \mbox{if} \ i, j \in \eta_{-1} 
		\end{array} 
		\right. \quad \text{and} \quad 
N_iN_j^*=\left\{
\begin{array}{ll}
	N_j^*N_i & \mbox{if} \ i, j \in \eta_1, \\ 
	\overline{\alpha}_i N_j^*N_i & \mbox{if} \ i \in \eta_1, j \in \eta_{-1}, \\
	\overline{q}_{mi}\overline{q}_{mj}N_j^*N_i & \mbox{if} \ i, j \in \eta_{-1}. 
\end{array} 
\right.
	\end{equation}
For every non-negative integers $m_1, \dotsc, m_k$, we show that 
			\begin{equation}\label{eqn_222}
		V^*N_1^{m_1}\dots N_k^{m_k}V=T_1^{m_1}\dots T_k^{m_k}.
	\end{equation}
It is clear from (\ref{eqn10.12}) \& (\ref{eqn_221}) that the contractions in the tuples $\underline{N}, \underline{T}$ satisfy the same intertwining relations and $\underline{N}$ is a doubly $q$-commuting tuple. Thus, (\ref{eqn_222}) is equivalent to showing that 
\begin{equation}\label{eqn_223}
V^*N_{\sigma(1)}^{m_1}\dots N_{\sigma(k)}^{m_k}V=T_{\sigma(1)}^{m_1}\dots T_{\sigma(k)}^{m_k}
\end{equation}
for some permutation $\sigma$ on $\{1, \dotsc, k\}$ and all non-negative integers $m_1, \dotsc, m_k$. For operators $A$ and $B$ with $AB=-BA$ and $n \in \N \cup \{0\}$, it is easy to verify that
\begin{equation}\label{eqn_theta}
(AB)^n=(-1)^{\theta(n)} A^nB^n \quad \text{where} \quad \theta(n)=\frac{1}{2}n(n-1).
\end{equation}
Choose a permutation $\sigma$ such that 

\begin{enumerate}
	\item[(i)] $\sigma(1), \dotsc, \sigma(i) \in \eta_1$ and $\alpha_{\sigma(1)}=\dotsc = \alpha_{\sigma(i)} =1$;
	\item[(ii)] $\sigma(i+1), \dotsc, \sigma(j) \in \eta_1$ and $\alpha_{\sigma(i+1)}=\dotsc = \alpha_{\sigma(j)}=-1$;
	\item[(iii)] $\sigma(j+1), \dotsc, \sigma(l) \in \eta_{-1}$ and $q_{m\sigma(j+1)}=\dotsc = q_{m\sigma(l)} =1$;
	\item[(iv)] $\sigma(l+1), \dotsc, \sigma(k) \in \eta_{-1}$ and $q_{m\sigma(l+1)}=\dotsc = q_{m\sigma(k)} = -1$, 
\end{enumerate}
where $ 1 \leq i \leq j \leq l \leq k$. Take $m_1, \dotsc, m_k$ in $\N \cup \{0\}$. For the ease of computations, we define
\[
c_\sigma=c_{\sigma(1)}^{m_1}\dots c_{\sigma(j)}^{m_j}, \quad  w_\sigma=w_{\sigma(j+1)}^{m_{j+1}}\dots w_{\sigma(k)}^{m_k} \quad \text{and}  \quad \theta(m)=\theta(m_{l+1})+\dotsc+\theta(m_k). 
\]
Note that
\begin{equation}\label{eqn1201}
\begin{split}
		& \quad \ N_{\sigma(1)}^{m_1}\dots N_{\sigma(k)}^{m_k}\\
		&= \bigg[N_{\sigma(1)}^{m_1}\dots N_{\sigma(i)}^{m_i}\bigg]
		\bigg[N_{\sigma(i+1)}^{m_{i+1}}\dots N_{\sigma(j)}^{m_j}\bigg]
	    \bigg[N_{\sigma(j+1)}^{m_{j+1}}\dots N_{\sigma(l)}^{m_l}\bigg]
		\bigg[N_{\sigma(l+1)}^{m_{l+1}}\dots N_{\sigma(k)}^{m_k}\bigg]\\
		&= \bigg[c_{\sigma(1)}^{m_1}\dots c_{\sigma(i)}^{m_i}\bigg]
		\bigg[c_{\sigma(i+1)}^{m_{i+1}}\dots c_{\sigma(j)}^{m_j}\widetilde{R_{-1}}^{m_{i+1}+\dotsc + m_j}\bigg]
		\bigg[w_{\sigma(j+1)}^{m_{j+1}}\dots w_{\sigma(l)}^{m_l}U_m^{m_{j+1}+\dotsc + m_l}\bigg] \\
		& \qquad 
		\bigg[w_{\sigma(l+1)}^{m_{l+1}}\dots w_{\sigma(k)}^{m_k}(\widetilde{R_{-1}}U_m)^{m_{l+1}}\dotsc (\widetilde{R_{-1}}U_m)^{m_{k}}\bigg]\\	
		&=(-1)^{\theta(m)} c_\sigma w_\sigma
		\bigg[\widetilde{R_{-1}}^{m_{i+1}+\dotsc + m_j}
		U_m^{m_{j+1}+\dotsc + m_l}\bigg]
		\bigg[(\widetilde{R_{-1}}^{m_{l+1}}U_m^{m_{l+1}})\dotsc (\widetilde{R_{-1}}^{m_{k}}U_m^{m_{k}})\bigg].\\
\end{split}
\end{equation}
One can use induction to show that 
\begin{equation}\label{eqn1202}
(\widetilde{R_{-1}}^{m_{l+1}}U_m^{m_{l+1}})\dotsc (\widetilde{R_{-1}}^{m_{k}}U_m^{m_{k}})=(-1)^p\widetilde{R_{-1}}^{m_{l+1}+\dotsc + m_k}U_m^{m_{l+1}+\dotsc + m_k},
\end{equation}
where $p=m_{l+1}m_{l+2}+(m_{l+1}+m_{l+2})m_{l+3}+\dotsc +(m_{l+1}+\dotsc +m_{k-1})m_k$. Then
\begin{equation}\label{eqn1203}
	\begin{split}
	 \ N_{\sigma(1)}^{m_1}\dots N_{\sigma(k)}^{m_k}
		&=(-1)^{\theta(m)+p} c_\sigma w_\sigma
		\bigg[\widetilde{R_{-1}}^{m_{i+1}+\dotsc + m_j}
		U_m^{m_{j+1}+\dotsc + m_l}\bigg]
		\bigg[\widetilde{R_{-1}}^{m_{l+1}+\dotsc + m_k}U_m^{m_{l+1}+\dotsc + m_k}\bigg]\\	
			&=(-1)^{\theta(m)+p} c_\sigma w_\sigma
		\widetilde{R_{-1}}^{m_{i+1}+\dotsc + m_j}
		\bigg(U_m^{m_{j+1}+\dotsc + m_l}
		\widetilde{R_{-1}}^{m_{l+1}+\dotsc + m_k}\bigg)U_m^{m_{l+1}+\dotsc + m_k}\\	
		&= (-1)^{\theta(m)+p+t} c_\sigma w_\sigma
		\bigg(\widetilde{R_{-1}}^{m_{i+1}+\dotsc + m_j+m_{l+1}+ \dotsc +m_k}\bigg)
		\bigg(U_m^{m_{j+1}+\dotsc + m_k}\bigg),
	\end{split}
\end{equation}
where $t=(m_{j+1}+\dotsc + m_l)(m_{l+1}+\dotsc + m_k)$. Therefore, we have by (\ref{eqn_219}) \& (\ref{eqn1203}) that
\[
V^* N_{\sigma(1)}^{m_1}\dots N_{\sigma(k)}^{m_k}V=(-1)^{\theta(m)+p+t} c_\sigma w_\sigma
\bigg(R_{-1}^{m_{i+1}+\dotsc+m_j+m_{l+1}+\dotsc + m_k}\bigg)
\bigg(T_m^{m_{j+1}+\dotsc + m_k}\bigg).
\]
Following the same computations as in (\ref{eqn1201}), (\ref{eqn1202}), (\ref{eqn1203}) for $T_{\sigma(1)}, \dotsc, T_{\sigma(k)}$, we have that 
\[
T_{\sigma(1)}^{m_1}\dots T_{\sigma(k)}^{m_k}= (-1)^{\theta(m)+p+t}c_\sigma w_\sigma
\bigg(R_{-1}^{m_{i+1}+\dotsc +m_j+m_{l+1}+\dotsc+ m_k}\bigg)
\bigg(T_m^{m_{j+1}+\dotsc + m_k}\bigg)
\]
and so, (\ref{eqn_223}) holds. Putting everything together, we have proved that $\underline{N}$ is a doubly $q$-commuting tuple of contractions which dilates $\underline{T}$. The desired conclusion follows from Theorem \ref{thm_dcq} and the proof is now complete. 
	\end{proof}

For a general $q$-commuting tuple of contractions on $\C^2$, we have the following dilation theorem which can be considered as a corollary of Theorem \ref{thm_dil_III}.

\begin{thm}\label{thm_dilation_2}
		Let $\underline{T}=(T_1,\dots ,T_k)$ be a $q$-commuting tuple of contractions acting on $\mathbb{C}^2$. Then $\underline{T}$ is either a commuting tuple or there exist an invertible operator $P$ on $\C^2$ and a $\widetilde{q}$-commuting tuple of unitaries $(U_1, \dotsc, U_k)$ on a Hilbert space $\mathcal{K}$ containing $\HS$ such that $(\beta U_1, \dotsc, \beta U_k)$ dilates $(P^{-1}T_1P, \dotsc, P^{-1}T_kP)$, where $\beta=\|P^{-1}\| \|P\|$ and $\widetilde{q} \subseteq q \cup \{1\}$.
	\end{thm}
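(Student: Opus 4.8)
The plan is to combine the classification of Theorem \ref{structure thm} with the dilation result Theorem \ref{thm_dil_III}. The one genuine gap between them is that the classification produces a \emph{similarity} (via a possibly non-orthonormal basis), whereas Theorem \ref{thm_dil_III} requires \emph{unitary equivalence} to a Type tuple. The device that closes this gap is exactly the rescaling by $\beta=\|P^{-1}\|\,\|P\|$, which both manufactures a genuine Type tuple and restores contractivity.

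First I would dispose of the trivial alternatives: if $\underline{T}$ is commuting we are in the first case and there is nothing to prove, so assume $\underline{T}$ is non-commuting, and assume all $T_j$ are non-zero (a zero entry is harmless and may be set aside as in Remark \ref{rem_1}). Then Theorem \ref{structure thm} furnishes a basis of $\C^2$, i.e. an invertible operator $P$ on $\C^2$, such that, up to relabelling of indices, the conjugated tuple $(P^{-1}T_1P,\dots,P^{-1}T_kP)$ is literally of Type-$I$, Type-$II$ or Type-$III$.

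Next I would rescale to regain contractivity without disturbing the type. Setting $\beta=\|P^{-1}\|\,\|P\|$ and $S_j=\beta^{-1}P^{-1}T_jP$, each $S_j$ is a contraction since $\|S_j\|\le \beta^{-1}\|P^{-1}\|\,\|T_j\|\,\|P\|\le 1$. Because scaling by the common positive constant $\beta^{-1}$ preserves every entry pattern in the Type-$I$/$II$/$III$ forms and alters none of the intertwining constants (indeed $S_iS_j=q_{ij}S_jS_i$ and the sets $\eta_\alpha$ are unchanged), the tuple $(S_1,\dots,S_k)$ is again a $q$-commuting tuple of contractions of the same type. In particular it is unitarily equivalent, via the identity, to a Type tuple, so Theorem \ref{thm_dil_III} applies and yields a $\widetilde q$-commuting tuple of unitaries $(U_1,\dots,U_k)$ on some $\mathcal{K}\supseteq\C^2$, an isometry $V$, and the relation $S_1^{m_1}\cdots S_k^{m_k}=V^*U_1^{m_1}\cdots U_k^{m_k}V$, with $\widetilde q\subseteq q\cup\{1\}$.

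Finally I would undo the scaling. Since $(P^{-1}T_1P)^{m_1}\cdots(P^{-1}T_kP)^{m_k}=P^{-1}T_1^{m_1}\cdots T_k^{m_k}P=\beta^{\,m_1+\cdots+m_k}\,S_1^{m_1}\cdots S_k^{m_k}$, multiplying the dilation relation by $\beta^{\,m_1+\cdots+m_k}$ gives $(P^{-1}T_1P)^{m_1}\cdots(P^{-1}T_kP)^{m_k}=V^*(\beta U_1)^{m_1}\cdots(\beta U_k)^{m_k}V$ for all non-negative integers $m_1,\dots,m_k$, so that $(\beta U_1,\dots,\beta U_k)$ dilates $(P^{-1}T_1P,\dots,P^{-1}T_kP)$, as required. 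The main obstacle, and the real content of the argument, is precisely this similarity-versus-unitary-equivalence issue: one cannot feed $\underline{T}$ directly into Theorem \ref{thm_dil_III} because the basis supplied by the classification is generally non-orthonormal, and it is the conjugation-and-rescaling step that simultaneously produces a genuine Type tuple and recovers contractivity at the single uniform cost $\beta$.
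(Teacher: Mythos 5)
Your proposal is correct and follows essentially the same route as the paper: invoke Theorem \ref{structure thm} to obtain the similarity $P$, rescale the conjugated tuple by $\beta^{-1}=\left(\|P^{-1}\|\,\|P\|\right)^{-1}$ to restore contractivity while preserving the Type (and hence unitary equivalence, via the identity, to a Type tuple), apply Theorem \ref{thm_dil_III}, and then absorb the factor $\beta^{m_1+\dotsb+m_k}$ into the unitaries. Your explicit aside on zero entries via Remark \ref{rem_1} is a small addition the paper leaves implicit, but otherwise the two arguments coincide step for step.
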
 

\begin{proof}
Suppose that $\underline{T}=(T_1,\dots ,T_k)$ is a $q$-commuting tuple of contractions acting on $\mathbb{C}^2$. By Theorem \ref{structure thm}, $\underline{T}$ is either commuting or $\underline{T}$ is similar to a $q$-commuting tuple of contractions on $\C^2$ that is one of Type-$I$, Type-$II$, Type-$III$. In the latter case, there is an invertible operator $P$ on $\C^2$ such that $\underline{A}:=(A_1, \dotsc, A_k)=(P^{-1}T_1P, \dotsc, P^{-1}T_kP)$ is one of Type-$I$, Type-$II$ and Type-$III$. Note that the tuple $\underline{A}$ may or may not consist of contractions. Let $\beta=\|P^{-1}\|\|P\|$. Then $(\beta^{-1}A_1, \dotsc, \beta^{-1}A_2)$ is a $q$-commuting tuple of contractions that has same type as of the original tuple $\underline{T}$. We have by Theorem \ref{thm_dil_III} that there is a Hilbert space $\mathcal{K}$, an isometry $V: \C^2 \to \mathcal{K}$ and a $\widetilde{q}$-commuting tuple of unitaries $(U_1, \dotsc, U_k)$ on $\mathcal{K}$ with $\widetilde{q} \subseteq q \cup \{1\}$ such that
\[
(\beta^{-1}A_1)^{m_1}\dotsc (\beta^{-1}A_k)^{m_k}=V^*U_1^{m_1}\dotsc U_k^{m_k}V \ \ \text{and so,} \ \
 P^{-1}T_1^{m_1}\dotsc T_k^{m_k}P=V^*(\beta U_1)^{m_1}\dotsc (\beta U_k)^{m_k}V
\] 
for all non-negative integers $m_1, \dotsc, m_k$. The proof is now complete.
\end{proof}

 The next step to the results of this article is to determine if a tuple of $q$-commuting contractions on $\C^2$ admits a $q$-unitary dilation. In the present paper, we classified such $q$-commuting tuples in three types and found a $\widetilde q$-unitary dilation for each type or more generally for $q$-commuting tuples that are unitarily equivalent to each type, where $\widetilde q  \subseteq q \cup \{1\}$. However, neither we could prove that a $q$-commuting $k$-tuple of $2 \times 2$ matrix-contractions dilates to a $q$-commuting $k$-tuple of unitaries, nor we could produce any counter example to establish that such a $q$-commuting tuple on $\C^2$ does not dilate to any $q$-commuting tuple of unitaries. Hence, the question if Holbrook's result \cite{Holbrook} can be generalized in the $q$-commuting setting remains unresolved at this point.
 
 \vspace{0.3cm}
 
 \noindent \textbf{Acknowledgements.} The first named author is supported by the Seed Grant of IIT Bombay, the CDPA and the `Core Research Grant' with Award No. CRG/2023/005223 of Science and Engineering Research Board (SERB), India. The second named author is supported by the Ph.D Fellowship of Council of Scientific and Industrial Research (CSIR), India. The third named author is supported by the Prime Minister's Research Fellowship (PMRF ID 1300140), Government of India.
 
 \vspace{0.3cm}

\end{document}